\newcommand\C{\mathbb{C}}
\newcommand\R{\mathbb{R}}
\newcommand\D{\mathcal{D}}
\numberwithin{equation}{section}
\theoremstyle{remark}
\newtheorem{remark}{Remark}
\theoremstyle{lemma}
\newtheorem{lemma}{Lemma}
\theoremstyle{theorem}
\newtheorem{theorem}{Theorem}
\theoremstyle{corollary}
\theoremstyle{proposition}
\newtheorem{proposition}{Proposition}
\DeclarePairedDelimiter\floor{\lfloor}{\rfloor}
\title{Partitions into Distinct Parts with Bounded Largest Part}
\date{}
\author{Walter Bridges}
\address{Louisiana State University \\ Department of Mathematics \\ Baton Rouge, Louisiana,  USA}
\email{wbridg6@lsu.edu}
 \keywords{partitions, asymptotic analysis, circle method, probability}
 \subjclass[2020]{05A17 11P82}
\begin{document}

\maketitle

\begin{abstract}
We prove an asymptotic formula for the number of partitions of $n$ into distinct parts where the largest part is at most $t\sqrt{n}$ for fixed $t \in \R$.   Our method follows a probabilistic approach of Romik, who gave a simpler proof of Szekeres' asymptotic formula for distinct parts partitions when instead the number of parts is bounded by $t\sqrt{n}$.  Although equivalent to a circle method/saddle-point method calculation, the probabilistic approach motivates some of the more technical steps and even predicts the shape of the asymptotic formula, to some degree.
\end{abstract}

\section{Introduction}

A {\it distinct parts partition} $\lambda$ of $n$ is a set of positive integers $\{\lambda_1, \dots, \lambda_{\ell}\}$ satisfying
$$
 \lambda_1 > \lambda_2 > \dots > \lambda_{\ell} > 0; \qquad \sum_{j=1}^{\ell} \lambda_j = n.
$$  Here, $|\lambda|=n$ is the {\it size} of $\lambda$, and the $\lambda_j$ are its parts.  For example, the distinct parts partitions of $5$ are $5,$ $4+1,$ and $3+2$.  Let $d(n)$ denote the number of distinct parts partitions of $n$.  These numbers are easily seen to be generated by the following infinite product
$$
\sum_{n \geq 0}d(n) x^n = \prod_{k \geq 1} (1+x^k).
$$
Pioneering work of Hardy and Ramanujan used the modular properties of the infinite product to obtain an asymptotic series for $d(n)$ (and similar enumerations) after representing these coefficients as contour integrals around the origin (\cite{HR}, $\S 7.1$).  The main term in Hardy and Ramanujan's asymptotic series is
\begin{equation}\label{E:HRasymp}
d(n) \sim \frac{1}{4\sqrt[4]{3} n^{\frac{3}{4}}}e^{\frac{\pi}{\sqrt{3}}\sqrt{n}}.
\end{equation}
The {\it circle method} is now often used as an umbrella term for the asymptotic analysis of contour integrals, including Hardy-Ramanjuan's method and its many variants, as well as certain cases of the {\it saddle-point method}.  For an exposition of Hardy, Ramanujan, and Rademacher's original work, see \cite{A} Ch. 5-6 and for the saddle-point method, see \cite{FS} Ch. VIII.

A more recent approach to these asymptotic statistics, begun by Fristedt in \cite{F} and used by Romik in \cite{R}, is to reformulate the proof using probability theory.  This can make some of the steps more intuitive.  We explain these ideas further in Section \ref{S:Outline}.

Let $t$ be a fixed positive real number.  We study a restriction of $d(n)$ defined as
$$
d_{t}(n):=\text{Coeff} \ [x^n] \ \mathcal{D}_{t,n}(x), \qquad \text{where} \qquad \D_{t,n}(x):= \prod_{k \leq t\sqrt{n}} (1+x^k).
$$
Thus, $d_t(n)$ is the number of distinct parts partitions of $n$ with largest part at most $t\sqrt{n}$.  The smallest possible largest part in a distinct parts partition of $n$ with largest part at most $t\sqrt{n}$ is $\kappa$, where
$$
1+2+\dots + (\kappa-1) = \frac{\kappa(\kappa-1)}{2} < n \leq \frac{\kappa(\kappa+1)}{2}.
$$
Thus, we ignore the range $t\leq \sqrt{2}$, where often $d_t(n)=0$, and consider only $t> \sqrt{2}$.  We prove the following asymptotic formula for $d_t(n)$.  Here and throughout, $\floor{\alpha}$ denotes the greatest integer less than or equal to $\alpha$ and $\{\alpha\}:= \alpha - \floor{\alpha}$.

\begin{theorem}\label{T:dtasymp}  Let $t > \sqrt{2}$.  Define $\beta:\left(\sqrt{2}, \infty\right)\to \left(-\infty, \frac{\pi}{2\sqrt{3}}\right)$ implicitly as a function of $t$ so that
\begin{equation}\label{E:betadef}
1=\int_0^t \frac{ue^{-\beta u}}{1+e^{-\beta u}}du.
\end{equation}
(See Proposition \ref{P:beta}.)  Let
\begin{equation}\label{E:B(t)andA_n(t)def}
B(t):= 2\beta + t \log \left(1+e^{-\beta t}\right) \qquad \text{and} \qquad A_n(t):= \frac{e^{\frac{\beta t}{2}}+e^{-\frac{\beta t}{2}}}{2\left(1+e^{-\beta t}\right)^{\{t\sqrt{n}\}}}\sqrt{\frac{\beta'(t)}{\pi t}}.
\end{equation}
Then
$$
d_t(n) \sim \frac{A_n(t)}{n^{3/4}}e^{B(t)\sqrt{n}}.
$$
\end{theorem}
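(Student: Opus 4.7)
The approach is probabilistic, following Fristedt and Romik. For $x \in (0,1)$, let $X_1, X_2, \ldots$ be independent Bernoulli random variables with $P(X_k = 1) = x^k/(1+x^k)$, and set $S_n = S_n(x) := \sum_{k \le t\sqrt{n}} k X_k$. Weighting each distinct-parts partition with parts in $\{1,\dots,\floor{t\sqrt{n}}\}$ by the product of the corresponding $X_k$-probabilities gives the identity
$$
d_t(n) = x_n^{-n}\D_{t,n}(x_n) \cdot P(S_n(x_n) = n),
$$
valid for every choice of $x_n \in (0,1)$. I would take $x_n := e^{-\beta/\sqrt n}$ with $\beta = \beta(t)$ defined by \eqref{E:betadef}; this is natural because $\tfrac{1}{n}E[S_n(x_n)] = \tfrac{1}{n}\sum_{k \le t\sqrt n} \tfrac{k\, e^{-\beta k/\sqrt n}}{1+e^{-\beta k/\sqrt n}}$ is a Riemann sum for $\int_0^t \tfrac{ue^{-\beta u}}{1+e^{-\beta u}}\,du = 1$, so the event $\{S_n = n\}$ sits precisely at the mean.

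Next I would handle the deterministic prefactor $x_n^{-n}\D_{t,n}(x_n)$ via Euler-Maclaurin applied to $\log \D_{t,n}(x_n) = \sum_{k=1}^{\floor{t\sqrt n}} \log(1 + e^{-\beta k/\sqrt n})$. The leading integral, together with $x_n^{-n} = e^{\beta\sqrt n}$ and the integration-by-parts identity
$$
\int_0^t \log(1+e^{-\beta u})\,du = t\log(1+e^{-\beta t}) + \beta
$$
(where \eqref{E:betadef} kills the remaining integral), produces the exponential $e^{B(t)\sqrt n}$. The Euler-Maclaurin boundary term $\tfrac12[\log(1+e^{-\beta t})-\log 2]$ combined with the fractional-part defect $-\{t\sqrt n\}\log(1+e^{-\beta t})$ arising from the gap between $\floor{t\sqrt n}/\sqrt n$ and $t$ assemble into a subleading factor $(1+e^{-\beta t})^{1/2-\{t\sqrt n\}}/\sqrt 2$; higher-order Euler-Maclaurin corrections are $O(n^{-1/2})$ and may be absorbed into the $o(1)$.

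The crux of the proof is a local limit theorem yielding
$$
P(S_n(x_n) = n) \sim \frac{1}{\sqrt{2\pi\,\mathrm{Var}(S_n(x_n))}}.
$$
A Riemann-sum approximation gives $\mathrm{Var}(S_n(x_n)) \sim n^{3/2} V(t)$ with $V(t) := \int_0^t u^2 e^{-\beta u}/(1+e^{-\beta u})^2\,du$, and differentiating \eqref{E:betadef} in $t$ produces the key relation $V(t) = te^{-\beta t}/[\beta'(t)(1+e^{-\beta t})]$. Multiplying the three factors and using $e^{\beta t/2}(1+e^{-\beta t}) = e^{\beta t/2}+e^{-\beta t/2}$ to simplify recovers precisely $A_n(t) n^{-3/4} e^{B(t)\sqrt n}$. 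The main obstacle will be establishing the local limit theorem with adequate uniformity; I would attack it by Fourier inversion,
$$
P(S_n(x_n) = n) = \frac{1}{2\pi}\int_{-\pi}^{\pi} e^{-in\theta}\prod_{k \le t\sqrt n}\frac{1 + x_n^k e^{ik\theta}}{1 + x_n^k}\,d\theta,
$$
Taylor-expanding the logarithm of the integrand to Gaussian form on a window $|\theta| \le n^{-3/4+\varepsilon}$ (safely larger than $1/\sqrt{\mathrm{Var}(S_n)}$), and bounding the complementary range by showing that, for $|\theta|$ bounded away from $0$, a positive fraction of the factors $|1 + x_n^k e^{ik\theta}|/(1+x_n^k)$ are uniformly bounded below $1$, which yields exponential decay.
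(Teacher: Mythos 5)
Your proposal takes essentially the same route as the paper: the same probabilistic decomposition $d_t(n) = x^{-n}\D_{t,n}(x)P(N=n)$ with $x = e^{-\beta/\sqrt{n}}$, Euler--Maclaurin on $\log\D_{t,n}(x)$, and a local limit theorem via Fourier inversion. The algebraic identities you propose are correct and match the paper's: integration by parts using \eqref{E:betadef} gives $\int_0^t \log(1+e^{-\beta u})\,du = t\log(1+e^{-\beta t})+\beta$, and differentiating \eqref{E:betadef} in $t$ gives exactly the variance identity $V(t)=te^{-\beta t}/[\beta'(t)(1+e^{-\beta t})]$, so the three factors do assemble into $A_n(t)n^{-3/4}e^{B(t)\sqrt n}$.

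However, two points need attention. First, you restrict at the outset to $x\in(0,1)$, but for $\sqrt{2}<t<2$ one has $\beta(t)<0$ and hence $x_n=e^{-\beta/\sqrt n}>1$. The Bernoulli weights $x^k/(1+x^k)$ are still valid probabilities, but several of your intermediate manipulations (the Euler--Maclaurin expansion of $\log(1+e^{-\beta k/\sqrt n})$, and especially the bound on the third-order cumulant error in the Fourier argument, where one wants $|x^k|<1$) must be reorganized; the paper splits into the three cases $t>2$, $\sqrt2<t<2$, $t=2$ precisely for this reason, rewriting things in terms of $\gamma=-\beta>0$ in the middle case and handling $t=2$ (where $x=1$) directly.

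Second, and more substantively, the Fourier windowing as sketched leaves a genuine gap. Your central window is $|\theta|\le n^{-3/4+\varepsilon}$, and your tail bound is stated only ``for $|\theta|$ bounded away from $0$.'' The range $n^{-3/4+\varepsilon}\le|\theta|\ll n^{-1/2}$ is covered by neither. The equidistribution/Roth--Szekeres argument that makes ``a positive fraction of factors bounded below $1$'' precise only applies once $|\theta|\gg n^{-1/2}$ (there are $\asymp\sqrt n$ factors, so one needs $k\theta$ to wind around $2\pi$), and a naive third-order Taylor bound for the characteristic function fails to produce a usable dominating function on a window as wide as $|\theta|\asymp n^{-1/2}$, since the cubic error $\sum_k |k\theta|^3 x^k/(1-x^k)^3$ is of size $\asymp\sqrt n\,|\theta\sqrt n|^3$, which is not $o(1)$ there. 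The paper resolves this by taking the central window all the way out to $|\theta|\le v_0/\sqrt n$ and proving a dominating-function bound $|\phi_{x,n}(\theta)|\ll e^{-A\sqrt n (\theta\sqrt n)^2}$ on that window not via a crude Taylor estimate but via a second-order expansion of ${\rm Li}_2(-e^{-\beta t+ivt})$ (plus the identity \eqref{E:Li2id} and \eqref{E:betadef2}), and then invoking Lemma~\ref{L:rothszekeres} on $v_0/\sqrt n\le|\theta|\le\pi$. If you adopt the wider window and this finer expansion, your plan goes through; as written, the middle range is uncontrolled.
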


The oscillatory factor $\left(1+e^{-\beta t}\right)^{-\{t \sqrt{n}\}}$ is present because $t\sqrt{n}$ is not always an integer.  Numerically, this oscillation is also reflected in $d_t(n)$, which appears not to be increasing for $t$ close to $\sqrt{2}$.

\begin{remark}\label{R:betaBAlimits}
We record properties of the functions $\beta(t), B(t)$ and $A(t):=A_n(t) \left(1+e^{-\beta t}\right)^{\{t\sqrt{n}\}}$ in Section \ref{S:functionproperties}.  In particular, we show that $\beta$ and $B$ are strictly increasing, and we show that $\beta(t)$, $B(t)$ and $A(t)$ tend to $\frac{\pi}{2\sqrt{3}}, \frac{\pi}{\sqrt{3}}$ and $ \frac{1}{4\sqrt[4]{3}}$ respectively, as $t \to \infty$.  Thus, Theorem \ref{T:dtasymp} is consistent with Hardy and Ramanujan's asymptotic formula, and we recover \eqref{E:HRasymp} by letting $t \to \infty$.
\end{remark}

\begin{remark}\label{R:dt0percent}
It has been shown that the largest part of a typical distinct parts partition of $n$ is $c\sqrt{n} \log n$ for some $c$ (\cite{F}, Thm. 9.4), so that our $d_t(n)$ counts (asymptotically) $0\%$ of distinct parts partitions of $n$ for any fixed $t$.  That is, we have $d_t(n) = o\left(d(n)\right)$.  This is also implied by Theorem 1 because $B(t)$ is strictly increasing to $\frac{\pi}{\sqrt{3}}$ (see Proposition \ref{P:B(t)andA(t)limits}).  
\end{remark}

\begin{remark}
A weak form of Theorem \ref{T:dtasymp} was used recently in the author's proof of limit shapes for unimodal sequences \cite{B}, and the methods of \cite{B} suggest that a limit shape for the partitions enumerated by $d_t(n)$ is
$$
f_t(x):= \frac{1}{\beta} \log\left(\frac{1+e^{-\beta x}}{1+e^{-\beta t}}\right),
$$
(see Figure \ref{F:limitshape}).  It is curious that the concavity of the curves $y=f_t(x)$ changes at $t=2$.
\begin{figure} \begin{center} \includegraphics[width=.9\textwidth]{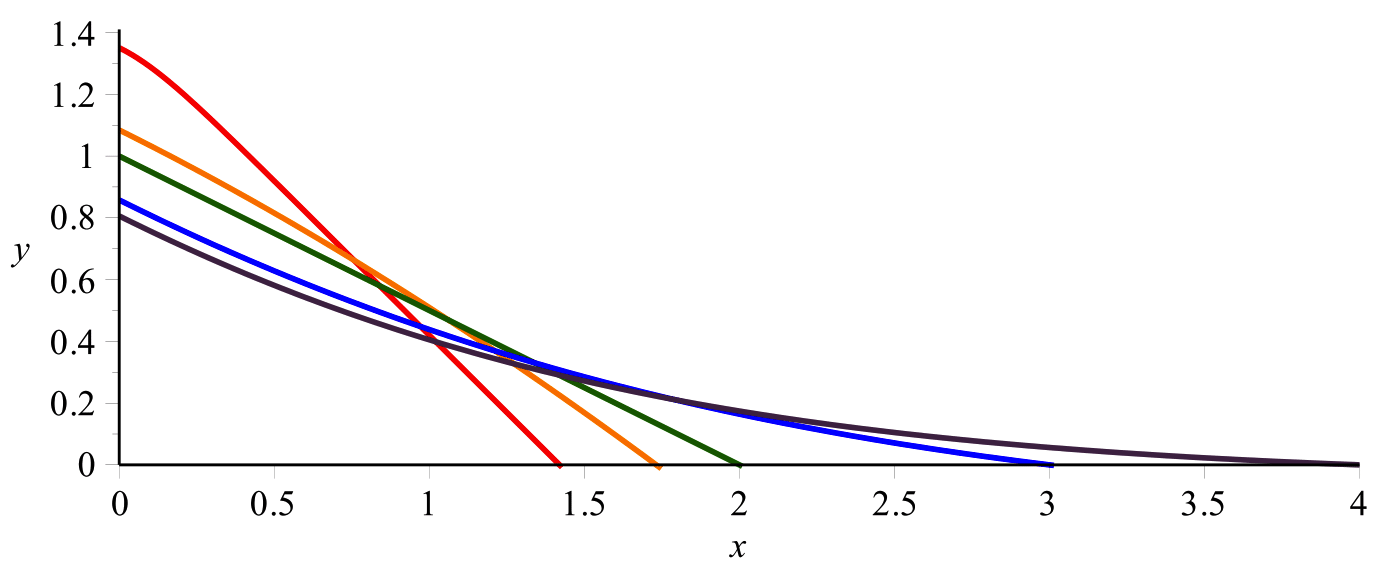} \end{center} \caption{Plots of $y=f_t(x)$ when $t=1.42$, $\sqrt{3}$, $2$, $3$ and $4$.  The $x$-intercepts occur at $t$. (generated using Maple)} \label{F:limitshape}\end{figure}
\end{remark}

Szekeres found an asymptotic formula for distinct parts partitions when instead the {\it number of parts} is at most $t\sqrt{n}$ (\cite{S1}, \cite{S2}).  When parts are allowed to repeat, bounding the number of parts and bounding the size of the largest part give the same enumeration function due to a simple symmetry on the Ferrer's diagrams of partitions called conjugation (see \cite{A}, $\S 1.3$).  But here, when parts are distinct, these two notions are different.

Szekeres' proof in \cite{S2} is based on the saddle-point method, and later Romik \cite{R} recast and simplified this proof using Fristedt's probabilistic machinery \cite{F}.  Although equivalent to a circle method calculation, Romik's proof motivates some of the more technical steps in the proof and even predicts the shape of the asymptotic formula, to some degree.  Our proof here closely follows Romik.

In Section \ref{S:Outline}, we outline the proof, motivating the probabilistic model.  Then we state three propositions that together imply Theorem \ref{T:dtasymp}.  In Section \ref{S:functionproperties} we record some properties of the functions $\beta(t)$, $B(t)$ and $A(t)$, including those mentioned in Remark \ref{R:betaBAlimits}.  In Section \ref{S:Proofs}, we prove Propositions \ref{P:logDtasymp}, \ref{P:expvar} and \ref{P:Probasymp}.  Section \ref{S:lemmaproofs} provides the proofs of two technical lemmas used in Section \ref{S:Proofs}; these could be useful in similar asymptotic analysis and may be of independent interest.

\section*{Acknowledgments} We thank the two anonymous referees their helpful suggestions.

\section{Proof Outline and Probabilistic Model}\label{S:Outline}

Throughout the remainder of the paper, $x$ will be a positive real number.  We prove Theorem \ref{T:dtasymp} through three propositions.  Proposition \ref{P:logDtasymp} anticipates the asymptotic behavior of $\log d_{t}(n)$ through classical saddle-point bounds, while Propositions \ref{P:expvar} and \ref{P:Probasymp} complete the proof using Fristedt's probabilistic machinery.

\subsection{Saddle-Point Bounds}\label{S:SPB}

  We begin with the trivial inequality,
\begin{equation}\label{E:saddlepointbound}
d_t(n) \leq x^{-n}\D_{t,n}(x).
\end{equation}
As explained in the book of Flajolet and Sedgewick (\cite{FS}, p. 550), the right-hand side of \eqref{E:saddlepointbound}, as a function of $x \in (0, \infty)$, has positive second derivative with respect to $x$ and tends to $+\infty$ when $x \to 0$ and also when $x \to \infty$.  Thus, there is a unique saddle-point $x=x_n$ on the positive real axis for the function $|z^{-n}\D_{t,n}(z)|$ of a complex variable $z$.  In fact, $x$ will approach 1 as $n \to \infty$, from below when $t>2$ and from above when $t<2$.  As in many similar cases, we anticipate that this $x$ forces the $n$-th term of the right-hand side of \eqref{E:saddlepointbound} to dominate, so that we expect the logarithm of the two terms to be asymptotic:
\begin{equation}\label{E:logdtstuff}
\log d_t(n) \sim \log \left(x^{-n} \D_{t,n}(x) \right).
\end{equation}
Thus, with this $x$ in hand, we ascertain an upper bound for $d_t(n)$ by finding the asymptotic behavior of the right-hand side of \eqref{E:logdtstuff}.  

More explicitly, we set $x= e^{-\frac{y}{\sqrt{n}}}$ for $y \in \mathbb{R}$ and write
$$
\log\left(x^{-n}\D_{t,n}(x)\right)= \sqrt{n}f_n(y), \qquad \text{where} \qquad f_n(y):= y+ \frac{1}{\sqrt{n}} \log \D_{t,n}\left(x\right).$$
One computes
\begin{equation}\label{E:fnprime}
f'_n(y)= \frac{d}{dy} f_n(y) = 1-\frac{1}{n} \sum_{k \leq t\sqrt{n}} \frac{kx^k}{1+x^k},
\end{equation}
so that the saddle-point occurs at (or very near) $x$ when $f'_n(y) \sim 0.$  We will show in Proposition \ref{P:expvar} that this is accomplished by choosing $y=\beta$.  Indeed, the sum in \eqref{E:fnprime} is just a Riemann sum for the integral \eqref{E:betadef} defining $\beta$.  With $\beta$ in hand, an application of Euler-Maclaurin summation leads to the following.

\begin{proposition}\label{P:logDtasymp}
With $x=e^{-\frac{\beta}{\sqrt{n}}}$, we have
\begin{equation}\label{E:logDtasymp}
\log \left(x^{-n}\D_{t,n}(x)\right) = B(t)\sqrt{n} + \log\left(\sqrt{\frac{1+e^{-\beta t}}{2}}\right)-\log\left(1+e^{-\beta t}\right) \{t\sqrt{n}\} + o(1),
\end{equation}
where $B(t)$ is defined in Theorem \ref{T:dtasymp}.
\end{proposition}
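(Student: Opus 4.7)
The plan is to expand the logarithm and apply Euler–Maclaurin summation. With $x=e^{-\beta/\sqrt n}$ and $K:=\lfloor t\sqrt n\rfloor$, we have
$$
\log\!\left(x^{-n}\D_{t,n}(x)\right)=\beta\sqrt n+\sum_{k=1}^{K}f(k),\qquad f(u):=\log\!\left(1+e^{-\beta u/\sqrt n}\right).
$$
I would then apply the standard Euler–Maclaurin formula
$$
\sum_{k=1}^{K}f(k)=\int_{0}^{K}f(u)\,du+\frac{f(K)-f(0)}{2}+\sum_{j\ge 1}\frac{B_{2j}}{(2j)!}\bigl(f^{(2j-1)}(K)-f^{(2j-1)}(0)\bigr)+R,
$$
and argue that the derivative corrections are $o(1)$. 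Indeed, every derivative $f^{(j)}(u)$ carries a factor $(\beta/\sqrt n)^{j}$ from the chain rule together with a bounded function of $u/\sqrt n$, so $f^{(2j-1)}(K)-f^{(2j-1)}(0)=O(n^{-(2j-1)/2})$, and a routine bound on the Euler–Maclaurin remainder makes $R=o(1)$ as well. Similarly, since $f(K)=\log(1+e^{-\beta t})+O(1/\sqrt n)$ and $f(0)=\log 2$, the trapezoidal correction contributes $\tfrac12\log\!\bigl((1+e^{-\beta t})/2\bigr)+o(1)=\log\sqrt{(1+e^{-\beta t})/2}+o(1)$.

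Next I would treat the integral. Substituting $v=u/\sqrt n$ gives
$$
\int_{0}^{K}f(u)\,du=\sqrt n\int_{0}^{K/\sqrt n}\log(1+e^{-\beta v})\,dv=\sqrt n\int_{0}^{t}\log(1+e^{-\beta v})\,dv-\sqrt n\int_{t-\{t\sqrt n\}/\sqrt n}^{t}\log(1+e^{-\beta v})\,dv.
$$
A first-order Taylor expansion of the second integrand near $v=t$ yields
$$
\sqrt n\int_{t-\{t\sqrt n\}/\sqrt n}^{t}\log(1+e^{-\beta v})\,dv=\{t\sqrt n\}\log(1+e^{-\beta t})+O(1/\sqrt n),
$$
which accounts for the oscillatory term in \eqref{E:logDtasymp}.

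It remains to convert $\sqrt n\int_{0}^{t}\log(1+e^{-\beta v})dv$ into a multiple of $B(t)\sqrt n$. Integration by parts gives
$$
\int_{0}^{t}\log(1+e^{-\beta v})\,dv=t\log(1+e^{-\beta t})+\beta\int_{0}^{t}\frac{ve^{-\beta v}}{1+e^{-\beta v}}\,dv,
$$
and now the defining equation \eqref{E:betadef} for $\beta$ collapses the right‑hand integral to $1$. Combining this with the leading $\beta\sqrt n$ from $-n\log x$ produces $(2\beta+t\log(1+e^{-\beta t}))\sqrt n=B(t)\sqrt n$. Assembling all pieces yields exactly \eqref{E:logDtasymp}.

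The main obstacle is bookkeeping: one must be careful that every discrepancy between the sum and its continuous approximation—Euler–Maclaurin derivative tails, the trapezoidal correction, the truncation to $K=\lfloor t\sqrt n\rfloor$, and the Taylor remainder near $v=t$—is genuinely $o(1)$ and not merely $o(\sqrt n)$. The one conceptual point is recognizing that the $\{t\sqrt n\}$ term is an unavoidable consequence of truncating the continuous endpoint $t\sqrt n$ to an integer; it is precisely this contribution that survives the limit and generates the oscillatory prefactor in Theorem \ref{T:dtasymp}.
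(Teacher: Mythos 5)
Your proposal is correct, and it takes a genuinely more streamlined route than the paper's proof. The paper also uses Euler--Maclaurin summation, but it evaluates the resulting integral via the dilogarithm: it rewrites $\int \log(1+e^{-\beta v})\,dv$ as $\mathrm{Li}_2(-e^{-\beta t})$, invokes the identity \eqref{E:Li2id}, and then matches this against the alternate characterization \eqref{E:betadef2} of $\beta$. Because the small-argument asymptotics of $\mathrm{Li}_2$ behave differently depending on the sign of $\beta$, the paper is forced into a three-way case split ($t>2$, $\sqrt{2}<t<2$, $t=2$), and in the middle case it first peels off the linear term $\gamma k/\sqrt n$ before summing. Your proposal sidesteps all of this: integration by parts turns $\int_0^t\log(1+e^{-\beta v})\,dv$ directly into $t\log(1+e^{-\beta t})+\beta\int_0^t\frac{ve^{-\beta v}}{1+e^{-\beta v}}dv$, and the defining equation \eqref{E:betadef} collapses the remaining integral to $1$ \emph{regardless of the sign of} $\beta$, yielding $B(t)\sqrt n$ uniformly for all $t>\sqrt 2$. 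A second, smaller difference concerns the Euler--Maclaurin remainder: the paper stops at the trapezoidal correction, for which the trivial bound on the leftover integral $\int \frac{\beta}{\sqrt n}\frac{e^{-\beta u/\sqrt n}}{1+e^{-\beta u/\sqrt n}}(\{u\}-\tfrac12)\,du$ is only $O(1)$, so it must invoke a Riemann--Lebesgue-type oscillation argument to push this to $o(1)$; your inclusion of the $B_{2}$ Bernoulli term makes that correction $O(n^{-1/2})$ and the remainder $\int_0^K|f''|\ll n^{-1/2}$, so everything is $o(1)$ by a crude bound. Both approaches are valid; yours trades the dilogarithm identities and case analysis for a cleaner identity and a slightly longer Euler--Maclaurin expansion.
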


As observed above, Proposition \ref{P:logDtasymp} implies $\log d_t(n) \ll B(t)\sqrt{n}$, but we will see later that the two are actually asymptotic.

\subsection{Probabilistic Model}

From probability theory we will require the elementary notions of expectation, variance and distribution of discrete random variables, as well Fourier inversion of characteristic functions (which in this context is equivalent to an application of Cauchy's Theorem from complex analysis).  We will also mention central and local limit theorems.  All of these topics are covered in most standard probability texts; for instance see \cite{Bi}.

We now repair the inequality \eqref{E:saddlepointbound} by introducing a certain probability measure depending on $x$ as
\begin{equation}\label{E:repairineq}
P_{x,n}(N=k)= \frac{d_t(k)x^k}{\D_{t,n}(x)}, \qquad \text{so that} \qquad d_t(n)= x^{-n}\D_{t,n}(x)P_{x,n}(N=n).
\end{equation}
We define $P_{x,n}$ and the random variable $N$ below.

Our probability measure $P_{x,n}$ is similar to the ones introduced by Fristedt \cite{F}, who invented an early variant of a {\it Boltzmann model} for partitions and used it to prove many far-reaching results on the structure of partitions.  When applied to partitions, Boltzmann sampling algorithms select partitions of size roughly $n$, roughly uniformly and in nearly linear time, assuming $n$ is large.  See \cite{DFLS} for more on Boltzmann sampling for combinatorial structures.

Following Fristedt, we define a probability measure $P_{x,n}$ on the set of partitions $\lambda$ generated by $\D_{t,n}$ by setting
$$P_{x,n}(\lambda):= \frac{x^{|\lambda|}}{\D_{t,n}(x)},$$
where $|\lambda|$ is the size of the partition $\lambda$, i.e. the sum of its parts.

Let $\{X_k\}_{k=1}^{t\sqrt{n}}$ be random variables giving the multiplicity of $k$ in a partition $\lambda$.  Since our partitions have distinct parts, $X_k$ is Bernoulli and one computes
$$
P_{x,n}(X_k=0) = \frac{1}{1+x^k} \qquad \text{and} \qquad P_{x,n}(X_k=1) = \frac{x^{k}}{1+x^k}.
$$
It is also straightforward to show that the $X_k$'s are independent under $P_{x,n}$.  Now set \newline $N:= \sum_{k \leq t\sqrt{n}} kX_k$, a random variable representing the size of a partition.  Using independence, its expectation and variance under $P_{x,n}$ are
\begin{equation}\label{E:expvardef}
{\rm E}_{x,n}(N)=\sum_{k \leq t\sqrt{n}} \frac{kx^k}{1+x^k}, \qquad \sigma_n^2:= {\rm Var}_{x,n}(N)= \sum_{k \leq t\sqrt{n}}\frac{k^2x^{k}}{(1+x^k)^2}.
\end{equation}

Returning to \eqref{E:fnprime}, we see that $f_n'(y) \sim 0$ if and only if ${\rm E}_{x,n}(N) \sim n$, so the choice $y=\beta$ ensures that the expectation of $N$ is asymptotically $n$ under $P_{x,n}$ with $x=e^{-\frac{\beta}{\sqrt{n}}}$.  Thus, we prove the following.

\begin{proposition}\label{P:expvar}
With $x=e^{-\frac{\beta}{\sqrt{n}}}$, we have
\begin{equation}\label{E:expasymp}
{\rm E}_{x,n}(N) = n + O(\sqrt{n}),
\end{equation}
and
\begin{equation}\label{E:varasymp}
\sigma^2_n= {\rm Var}_{x,n}(N)= \frac{t}{(1+e^{\beta t})\beta'(t)} n^{\frac{3}{2}}+ O(n).
\end{equation}
\end{proposition}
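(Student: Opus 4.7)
The plan is to recognize both sums in \eqref{E:expvardef} as Riemann sums with step size $1/\sqrt{n}$ and pass to their integral analogs via Euler--Maclaurin summation, then identify the resulting integrals using the defining relation \eqref{E:betadef} for $\beta(t)$.

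First I would substitute $x = e^{-\beta/\sqrt{n}}$ and rescale by writing $u = k/\sqrt{n}$. For the expectation this gives
$$
{\rm E}_{x,n}(N) = \sum_{k \leq t\sqrt{n}} \frac{k e^{-\beta k/\sqrt{n}}}{1+e^{-\beta k/\sqrt{n}}} = \sqrt{n} \sum_{k \leq t\sqrt{n}} \varphi\!\left(\tfrac{k}{\sqrt{n}}\right),
\qquad \varphi(u) := \frac{u e^{-\beta u}}{1+e^{-\beta u}},
$$
and for the variance,
$$
\sigma_n^2 = \sum_{k \leq t\sqrt{n}} \frac{k^2 e^{-\beta k/\sqrt{n}}}{(1+e^{-\beta k/\sqrt{n}})^2} = n \sum_{k \leq t\sqrt{n}} \psi\!\left(\tfrac{k}{\sqrt{n}}\right),
\qquad \psi(u) := \frac{u^2 e^{-\beta u}}{(1+e^{-\beta u})^2}.
$$
Both $\varphi$ and $\psi$ are smooth and bounded on $[0,t]$, with bounded derivatives, so Euler--Maclaurin (or equivalently one of the technical lemmas referenced for Section \ref{S:lemmaproofs}) will give
$$
\sum_{k \leq t\sqrt{n}} \varphi\!\left(\tfrac{k}{\sqrt{n}}\right) = \sqrt{n}\int_0^t \varphi(u)\,du + O(1), \qquad
\sum_{k \leq t\sqrt{n}} \psi\!\left(\tfrac{k}{\sqrt{n}}\right) = \sqrt{n}\int_0^t \psi(u)\,du + O(1).
$$
Multiplying by the prefactors yields ${\rm E}_{x,n}(N) = n\int_0^t \varphi\,du + O(\sqrt{n})$ and $\sigma_n^2 = n^{3/2}\int_0^t \psi\,du + O(n)$. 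The expectation claim \eqref{E:expasymp} follows immediately because $\int_0^t \varphi(u)\,du = 1$ by the very definition \eqref{E:betadef} of $\beta$.

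For the variance, the key observation is to identify $\int_0^t \psi(u)\,du$ in closed form. Differentiating the identity $1 = \int_0^t \varphi(u)\,du$ implicitly with respect to $t$, using $\partial_\beta \varphi(u) = -u \cdot u e^{-\beta u}/(1+e^{-\beta u})^2 = -\psi(u)$ and the chain rule through $\beta(t)$, gives
$$
0 = \varphi(t) + \beta'(t)\int_0^t \partial_\beta \varphi(u)\,du = \frac{t e^{-\beta t}}{1+e^{-\beta t}} - \beta'(t)\int_0^t \psi(u)\,du,
$$
so $\int_0^t \psi(u)\,du = \dfrac{t e^{-\beta t}}{\beta'(t)(1+e^{-\beta t})} = \dfrac{t}{\beta'(t)(1+e^{\beta t})}$, matching \eqref{E:varasymp} exactly.

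The main technical obstacle is the careful Euler--Maclaurin bookkeeping: the upper endpoint $t\sqrt{n}$ is typically not an integer, so one must account for the fractional part $\{t\sqrt{n}\}$ when comparing the sum up to $\lfloor t\sqrt{n}\rfloor$ with the integral up to $t$. However, since the integrand $\varphi$ (resp.\ $\psi$) is bounded on $[0,t]$, this endpoint discrepancy contributes only $O(1)$ to the rescaled sums, which is absorbed into the $O(\sqrt{n})$ (resp.\ $O(n)$) error after multiplication. I expect the detailed verification of this Euler--Maclaurin bound to be exactly the content of one of the technical lemmas of Section \ref{S:lemmaproofs}.
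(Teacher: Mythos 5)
Your proof is correct and follows the same Riemann-sum strategy as the paper for both \eqref{E:expasymp} and \eqref{E:varasymp}, but the way you close out the variance integral is a genuinely different (and slightly slicker) route. The paper evaluates $\int_0^t \psi(u)\,du$ by substituting $v=\beta u$, integrating by parts, invoking the identity $\beta^2 = \int_0^{\beta t}\frac{u}{1+e^u}\,du$ (equation \eqref{E:beta^2int}), and only at the very end plugging in the explicit formula for $\beta'(t)$ from \eqref{E:betaprimedef} to match the claimed shape. You instead implicitly differentiate the defining relation $1 = \int_0^t \varphi(u)\,du$ in $t$, use $\partial_\beta \varphi = -\psi$ to pull the variance integral out directly, and arrive at $\beta'(t)\int_0^t\psi = \varphi(t)$, which is \eqref{E:varasymp} immediately. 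Conceptually these are the same (the paper derives $\beta'(t)$ in Proposition \ref{P:beta}(b) by exactly this kind of differentiation), but your version short-circuits the integration by parts and avoids needing to recognize the final algebraic rearrangement into $t/(\beta'(t)(1+e^{\beta t}))$. It also handles $t=2$ (where $\beta=0$) uniformly, provided one notes $\beta'(2)=\tfrac32$; the paper treats that case separately as ``straightforward.'' One small inaccuracy in your write-up: the Euler--Maclaurin/Riemann-sum error estimate is not one of the lemmas in Section \ref{S:lemmaproofs} (those concern bounding $f_x(s)$ and a Roth--Szekeres-type lower bound on $\sum\|k\alpha\|^2$); the paper simply invokes the standard $O(1/\sqrt{n})$ error for a $C^1$ integrand without further comment, which suffices since $\varphi,\psi$ are smooth on $[0,t]$.
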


In fact, we will show that $\frac{N-n}{\sigma_n}$ is asymptotically normally distributed under $P_{x,n}$ (see Figure \ref{F:Px}), and so a sort of central limit theorem holds for the $X_k$.    Heuristically, this suggests that $P_{x,n}(N=n) \sim \frac{1}{\sqrt{2\pi} \sigma_n}$, as follows: $N$ takes only integer values, so we expect

\begin{figure} \begin{center} \includegraphics[width=.7\textwidth]{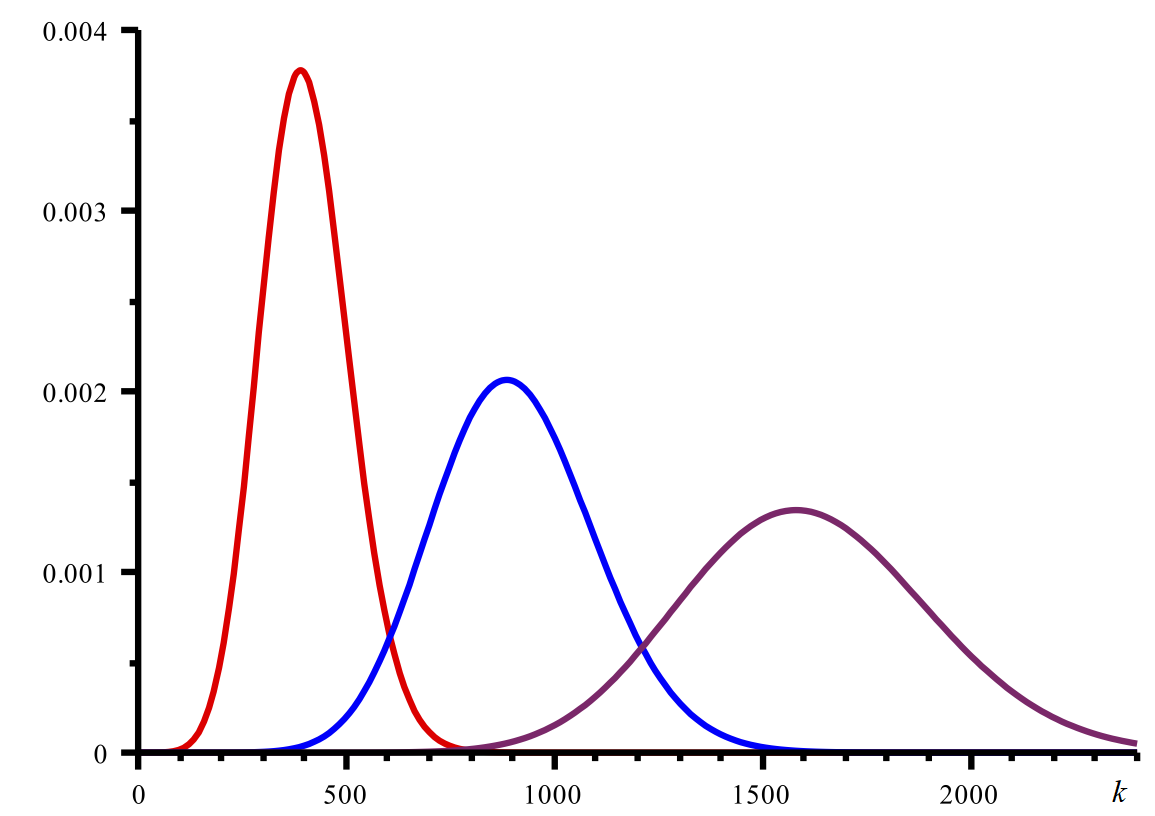} \end{center} \caption{Plots of $P_{x,n}(N=k)$ with $t=3$ and $n=$ {\color{red} 400}, {\color{blue} 900} and {\color{Plum} 1600}. (generated using Maple)} \label{F:Px} \end{figure}

$$
P_{x,n}(N=n) = P_{x,n}\left(-\frac{1}{2}\leq N-n\leq \frac{1}{2}\right)= P_{x,n}\left(-\frac{1}{2\sigma_n} \leq \frac{N-n}{\sigma_n} \leq \frac{1}{2\sigma_n}\right) $$ $$\approx \frac{1}{\sqrt{2\pi}}\int_{-\frac{1}{2\sigma_n}}^{\frac{1}{2\sigma_n}} e^{-\frac{u^2}{2}} du \sim \frac{1}{\sqrt{2\pi}\sigma_n}.
$$
Since $d_t(n)= x^{-n}\D_{t,n}(x)P_{x,n}(N=n)$, the local limit theorem suggested above, together with Proposition \ref{P:logDtasymp}, implies Theorem 1.  Our final proposition is a formal statement of the asymptotic normality of $\frac{N-n}{\sigma_n}$ together with the above heuristic.

\begin{proposition}\label{P:Probasymp}
With $x=e^{-\frac{\beta}{\sqrt{n}}}$, we have
\begin{equation}\label{E:Nnormal}
\lim_{n \to \infty} P_{x,n}\left( \frac{N-n}{\sigma_n} \leq v \right) = \frac{1}{\sqrt{2\pi}} \int_{-\infty}^v e^{-\frac{u^2}{2}} du, \qquad \text{for $v \in \mathbb{R}$.}
\end{equation}
Moreover,
\begin{equation}\label{E:Probasymp}
P_{x,n}(N=n) \sim \frac{1}{\sqrt{2\pi}\sigma_n}.
\end{equation}
\end{proposition}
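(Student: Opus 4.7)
The plan is to treat $N = \sum_{k \leq t\sqrt{n}} kX_k$ as a sum of independent random variables under $P_{x,n}$ and analyze its characteristic function
$$
\phi_n(\theta) := {\rm E}_{x,n}\!\left[e^{i\theta N}\right] = \prod_{k \leq t\sqrt{n}} \frac{1+x^k e^{ik\theta}}{1+x^k}.
$$
For the central limit statement \eqref{E:Nnormal}, I would verify Lindeberg's condition for the triangular array $\{kX_k\}_{k \leq t\sqrt{n}}$: each summand is bounded by $t\sqrt{n}$ in absolute value, while Proposition \ref{P:expvar} gives $\sigma_n^2 \asymp n^{3/2}$, so for any fixed $\varepsilon > 0$ we have $|kX_k - {\rm E}_{x,n}(kX_k)| \leq 2t\sqrt{n} < \varepsilon \sigma_n$ for every $k$ once $n$ is large, trivially verifying Lindeberg. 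Because $\mu_n := {\rm E}_{x,n}(N) = n + O(\sqrt{n})$, the centering correction $(\mu_n - n)/\sigma_n = O(n^{-1/4})$ is negligible, and \eqref{E:Nnormal} follows.

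For the local statement \eqref{E:Probasymp}, I would use the Fourier inversion formula for integer-valued random variables and the substitution $\theta = u/\sigma_n$ to write
$$
\sigma_n P_{x,n}(N=n) = \frac{1}{2\pi} \int_{-\pi\sigma_n}^{\pi\sigma_n} e^{-inu/\sigma_n}\, \phi_n\!\left(u/\sigma_n\right) du,
$$
and show the right-hand side tends to $1/\sqrt{2\pi}$, i.e.\ the integral tends to $\sqrt{2\pi}$. Split the range at $|u| = M_n$ with $M_n \to \infty$ but $M_n = o(\sigma_n^{1/2})$ (say $M_n = \log \sigma_n$). On the central piece $|u| \leq M_n$, Taylor expand $\log \phi_n(u/\sigma_n)$ around $0$: the linear term combines with $e^{-inu/\sigma_n}$ to give an $O(n^{-1/4})$ error from the centering, the quadratic term contributes $-u^2/2 + o(1)$ uniformly by Proposition \ref{P:expvar}, and cubic and higher terms are $O(u^3/\sigma_n) \to 0$. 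Dominated convergence then produces the desired $\int_{\R} e^{-u^2/2}\,du = \sqrt{2\pi}$ from this region.

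The main obstacle, as is typical of local limit theorems, is the tail bound showing that the contribution from $|u| > M_n$ is $o(1)$. From
$$
|\phi_n(\theta)|^2 = \prod_{k \leq t\sqrt{n}} \left(1 - \frac{2x^k(1-\cos k\theta)}{(1+x^k)^2}\right),
$$
each factor lies in $(0,1]$, so one needs enough terms with $1-\cos(k\theta)$ bounded away from $0$ to yield a useful upper bound on $|\phi_n(\theta)|$. On the intermediate range $M_n/\sigma_n \leq |\theta| \leq \delta$ for small fixed $\delta$, the second-order Taylor expansion already delivers $|\phi_n(\theta)| \leq e^{-c\sigma_n^2 \theta^2}$, handling this portion after integration. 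The genuinely delicate range is $\delta \leq |\theta| \leq \pi$, where one needs an equidistribution-type statement: for $\theta$ bounded away from $0 \pmod{2\pi}$, a positive fraction of the integers $k \in [1, t\sqrt{n}]$ have $k\theta$ bounded away from multiples of $2\pi$, forcing an exponentially small bound $|\phi_n(\theta)| \leq e^{-c\sqrt{n}}$ uniformly in this range. I expect precisely this uniform decay estimate to be one of the two technical lemmas deferred to Section \ref{S:lemmaproofs}, and it is the main technical hurdle of the proof.
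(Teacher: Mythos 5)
Your overall structure matches the paper: Fourier inversion, a split of the integration range, a pointwise Gaussian limit plus a dominating function in a central zone, and an equidistribution-type bound for the tail. Your alternative suggestion to get \eqref{E:Nnormal} via Lindeberg is fine (the paper instead reads both \eqref{E:Nnormal} and \eqref{E:Probasymp} off the same characteristic-function computation, since $\phi_{x,n}(u/\sigma_n)e^{-inu/\sigma_n}$ is literally the characteristic function of $(N-n)/\sigma_n$; this is a little more economical but not fundamentally different).

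The genuine gap is the intermediate range. You assert that for $M_n/\sigma_n \le |\theta| \le \delta$ with $\delta$ a small fixed constant, ``the second-order Taylor expansion already delivers $|\phi_n(\theta)| \le e^{-c\sigma_n^2\theta^2}$.'' That bound cannot hold on this whole range: since $\bigl|\log|\phi_{x,n}(\theta)|\bigr| \le \sum_{k\le t\sqrt n} 2x^k/(1+x^k)^2 = O(\sqrt n)$ unconditionally, one has $|\phi_{x,n}(\theta)| \ge e^{-C\sqrt n}$ for every $\theta$, whereas $e^{-c\sigma_n^2\delta^2} = e^{-c' n^{3/2}}$ is vastly smaller. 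The quadratic bound is available only on a window of width $O(1/\sqrt n)$ around the origin, because the Taylor expansion of $\log(1 + x^k e^{i k\theta})$ (or, equivalently, the expansion of $\mathrm{Li}_2(-e^{-\beta t + i v t})$ after writing $v = \sqrt n\,\theta$) is controlled only for $|v|$ bounded, i.e.\ $|\theta| \lesssim n^{-1/2}$; beyond that the terms $1-\cos(k\theta)$ for $k$ near $t\sqrt n$ are no longer comparable to $(k\theta)^2$. Consequently, the equidistribution estimate is not an afterthought for $\delta \le |\theta| \le \pi$ --- it must cover the entire range $c/\sqrt n \le |\theta| \le \pi$, which is exactly the scope of the paper's Lemma~\ref{L:rothszekeres}. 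Relatedly, your central split point $M_n = \log\sigma_n$ is too small: the paper splits at $|u| \le v_0\sigma_n/\sqrt n \asymp n^{1/4}$ precisely so that its central dominating bound and the equidistribution range $|u| \gtrsim \sigma_n/\sqrt n$ meet without a gap; with $M_n = \log\sigma_n$ you would leave the range $\log\sigma_n \lesssim |u| \lesssim n^{1/4}$ covered by neither estimate as you have stated them.

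Two smaller points. First, ``dominated convergence then produces the desired integral'' needs a dominating function valid over the whole growing window, not just pointwise control; the paper supplies the bound $|\phi_{x,n}(u/\sigma_n)| \ll e^{-A'u^2}$ for that purpose. Second, the control on the cubic remainder in the logarithm expansion is not as automatic as ``$O(u^3/\sigma_n)$''; it requires a uniform-in-$k$ estimate $|f_{x}(s)| \le c\, x|s|^3/(1-x)^3$ (the paper's Lemma~\ref{L:f_xbound}), which after summing over $k$ produces a convergent integral and the stated $O(u^3 n^{-1/4})$ error. That lemma, rather than only the equidistribution lemma, is the second deferred technical result you alluded to.
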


The proof of Proposition \ref{P:Probasymp} proceeds via Fourier inversion of the characteristic function for $N$.

\section{The functions $\beta(t), B(t)$ and $A(t)$.}\label{S:functionproperties}

In this section, we prove the claimed limits in Remark \ref{R:betaBAlimits} and record some additional properties of the functions $\beta(t)$, $B(t)$ and $A(t):= A_n(t)\left(1+e^{-\beta t}\right)^{\{t\sqrt{n}\}}$.  Here and in later sections, we require properties of the dilogarithm function, Li$_2(z)$, defined for $z \in \C \setminus (-\infty,-1)$ by the integral 
\begin{equation}\label{E:Li2def}
{\rm Li}_2(z):= -\int_0^z \frac{\log(1-w)}{w}dw,
\end{equation}
 taking the principal branch of the complex logarithm.  We also have the Taylor expansion Li$_2(z)= \sum_{n \geq 1} \frac{z^{n}}{n^2}$ for $|z| \leq 1$, and hence Li$_2(1)=\frac{\pi^2}{6}$.  See \cite{AS}, \S 27.7, where \newline $f(x)=  {\rm Li}_2(1-x)$.

\begin{proposition}\label{P:beta}
The function $\beta=\beta(t)$ satisfies the following properties.
\begin{itemize}
\item[{\rm (a)}] The function $\beta$ is well-defined by \eqref{E:betadef}, and we have
$$
 \begin{cases} \beta < 0 & \text{for $\sqrt{2} < t< 2$,} \\ \beta =0 & \text{for $t=2$,} \\ \beta > 0 & \text{for $t > 2$.} \end{cases}
$$
\item[{\rm (b)}]  The function $\beta$ is strictly increasing with 
\begin{equation}\label{E:betaprimedef}
\beta'(t)=\begin{cases} \frac{\beta t}{2(1+e^{\beta t})-t^2} & \text{for $t\neq 2$,} \\ \frac{3}{2} & \text{for $t = 2$.} \end{cases}
\end{equation}

\item[{\rm (c)}]  The following limits hold: 
\begin{equation}\label{E:betalimits}
\lim_{t \to \sqrt{2}^+} \beta =-\infty, \qquad \lim_{t \to \infty} \beta =\frac{\pi}{2\sqrt{3}}.
\end{equation}
\end{itemize}
\end{proposition}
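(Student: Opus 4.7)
For part (a), the plan is to introduce $F(\beta,t) := \int_0^t \frac{u}{1+e^{\beta u}}du$ (the right-hand side of \eqref{E:betadef} rewritten) and show that for each fixed $t > \sqrt 2$ there is a unique $\beta$ with $F(\beta,t)=1$. Differentiating under the integral gives $\partial_\beta F = -\int_0^t \frac{u^2 e^{\beta u}}{(1+e^{\beta u})^2}du < 0$, so $F(\cdot,t)$ is strictly decreasing. Its range is straightforward to compute: as $\beta\to -\infty$, dominated convergence yields $F \to \int_0^t u\,du = t^2/2 > 1$, while as $\beta\to \infty$, $F\to 0$. The intermediate value theorem then gives the unique $\beta(t)$. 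Finally $F(0,t)=t^2/4$, which equals $1$ exactly when $t=2$; monotonicity of $F$ in $\beta$ then pins down the sign of $\beta(t)$ in each of the three $t$-ranges.

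For part (b), the plan is implicit differentiation of $F(\beta(t),t)=1$, giving $\beta'(t) = -(\partial_t F)/(\partial_\beta F)$. The numerator is $t/(1+e^{\beta t})$. For the denominator, the key step is to observe that $\tfrac{e^{\beta u}}{(1+e^{\beta u})^2} = -\tfrac{1}{\beta}\tfrac{d}{du}\tfrac{1}{1+e^{\beta u}}$ and integrate by parts; the boundary contribution gives $-t^2/[\beta(1+e^{\beta t})]$ and the remaining integral is exactly $\tfrac{2}{\beta}\int_0^t \tfrac{u}{1+e^{\beta u}}du = \tfrac{2}{\beta}$ by the defining equation. Substituting yields the stated formula for $t\neq 2$, and the positivity $\beta'(t)>0$ is immediate from the original quotient (numerator and denominator both positive), so $\beta$ is strictly increasing. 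For $t=2$ (where $\beta=0$ and the formula is $0/0$) I would Taylor-expand $\tfrac{1}{1+e^{\beta u}} = \tfrac12 - \tfrac{\beta u}{4}+O(\beta^3 u^3)$ inside \eqref{E:betadef} to get $1 = t^2/4 - \beta t^3/12 + O(\beta^3)$, from which differentiating at $(t,\beta)=(2,0)$ gives $\beta'(2)=3/2$; one can also verify this by L'Hôpital applied to the formula for $t\neq 2$.

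For part (c), I would pass to the limit directly in $F(\beta,t)=1$ by dominated convergence. As $t\to\sqrt{2}^+$, if $\beta$ stayed bounded below one could pass to the limit and obtain a value $\leq F(\beta_0,\sqrt 2) < \int_0^{\sqrt 2}\!u\,du = 1$, a contradiction; combined with monotonicity of $\beta$ this forces $\beta\to -\infty$. For $t\to\infty$, since $\beta$ is increasing and bounded above (by the same contradiction idea: if $\beta\to +\infty$ then $F\to 0\neq 1$), the limit $\beta^\star := \lim_{t\to\infty}\beta(t)$ exists and is positive. Sending $t\to\infty$ in \eqref{E:betadef} and substituting $v=\beta^\star u$ in the resulting improper integral gives
$$1 = \frac{1}{(\beta^\star)^2}\int_0^\infty \frac{v}{1+e^v}dv = \frac{1}{(\beta^\star)^2}\cdot\frac{\pi^2}{12},$$
where the classical evaluation $\int_0^\infty \tfrac{v\,dv}{1+e^v} = \eta(2) = \tfrac{\pi^2}{12}$ follows from a dilogarithm computation (or geometric series expansion). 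Thus $\beta^\star = \pi/(2\sqrt 3)$.

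The main obstacles are essentially bookkeeping: justifying the differentiation-under-the-integral and dominated-convergence steps uniformly for $t$ near the endpoints, and handling the removable singularity at $t=2$ (where both formulas for $\beta'$ agree in the limit). The algebraic heart — the integration-by-parts identity that uses $F(\beta,t)=1$ to collapse the denominator — is a clean one-line trick, so the work is largely in confirming signs and taking limits carefully.
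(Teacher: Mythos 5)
Your proof is correct, and while part (a) matches the paper essentially verbatim (the paper calls your $F(\cdot,t)$ by the name $g$), parts (b) and (c) take a genuinely different route. For (b), the paper first substitutes $v=\beta u$ to rewrite \eqref{E:betadef} as $\beta^2=\int_0^{\beta t}\frac{u}{1+e^u}\,du$ and then differentiates that implicit relation; you instead differentiate $F(\beta(t),t)=1$ directly and collapse $\partial_\beta F$ by integration by parts against the antiderivative of $\tfrac{e^{\beta u}}{(1+e^{\beta u})^2}$, reusing the defining equation. (Small expository slip: the quantity you call ``the denominator'' and expand via the boundary term is $-\partial_\beta F$, not $\partial_\beta F$; the signs work out to the same final formula.) Your route has a real advantage: $\beta'>0$ falls out immediately from $\partial_t F>0$ and $\partial_\beta F<0$, whereas the paper must separately establish the positivity of $2(1+e^{\beta t})-t^2$ by the auxiliary inequality \eqref{betapos}. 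For (c), the paper passes the $t\to\infty$ limit through the dilogarithm identity \eqref{E:Li2id} applied to the $\beta^2$ form, landing on $\mathrm{Li}_2(1)-\tfrac12\mathrm{Li}_2(1)$; you evaluate $\int_0^\infty \frac{v\,dv}{1+e^v}=\eta(2)=\tfrac{\pi^2}{12}$ directly, which is more elementary and self-contained, though the paper's version is consistent with machinery it reuses elsewhere. Your contradiction argument for $\beta\to-\infty$ as $t\to\sqrt2^+$ is also a bit more careful than the paper's terse one-line justification. Both approaches are sound; yours is slightly cleaner for (b) and more elementary for (c), at the modest cost of not foreshadowing the dilogarithm manipulations used later in the paper.
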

\begin{proof}
For fixed $t > \sqrt{2}$, set $g(y):= \int_0^t \frac{u}{1+e^{yu}} du$.  Then $g(y)$ is a decreasing function of $y$ with
$$
\lim_{y \to -\infty} g(y) = \frac{t^2}{2} > 1 \qquad \text{and} \qquad \lim_{y \to \infty} g(y) = 0.
$$
Hence, there exists a unique solution, $\beta=\beta(t)$, to the equation $g(y)=1$, and $\beta$ is well-defined by \eqref{E:betadef}.

If $t>2$, then we must have $\beta(t)>0$, for if not,
$$
1=\int_0^t \frac{u}{1+e^{\beta u}}du > \frac{1}{2} \int_0^t udu= \frac{t^2}{4},
$$
which leads to the contradiction $2>t$.  A similar argument proves the remaining statements in part (a).

For $t \neq 2$, we rewrite \eqref{E:betadef} as
\begin{equation}\label{E:beta^2int}
\beta^2= \int_0^{\beta t} \frac{u}{1+e^{u}}du,
\end{equation}
and take the derivative of both sides to get
$$
\beta'(t)= \frac{\beta t}{2(1+e^{\beta t})-t^2}, \qquad \text{for $t \neq 2.$}
$$
To find $\beta'(2)$, we use the first two terms of the Taylor series for the integrand in \eqref{E:beta^2int} to write
$$
\beta^2 = \frac{\beta^2 t^2}{4}- \frac{\beta^3t^3}{12} + O\left(\beta^5 t^5\right),
$$
for $t$ near 2 (so $\beta$ near 0).  This implies
$$
\beta = \frac{3}{t}-\frac{12}{t^3} + O(\beta^3t^2),
$$
and thus by L'Hospital's Rule,
$$
\beta'(2)= \lim_{t \to 2} \frac{\beta}{t-2}= \lim_{t \to 2} \frac{\frac{3}{t}-\frac{12}{t^3} + O(\beta^3t^2)}{t-2}= \lim_{t \to 2} \frac{-3}{t^2}+\frac{36}{t^4}= \frac{3}{2}.
$$  We see that $\beta'(t) > 0$ for $t>2$ by observing
\begin{equation}\label{betapos}
1=\int_0^t \frac{u}{1+e^{\beta u}}du > \frac{1}{1+e^{\beta t}}\int_0^t u du = \frac{1}{1+e^{\beta t}} \cdot \frac{t^2}{2}.
\end{equation}
A similar argument shows that $\beta'(t) > 0$ for $\sqrt{2} < t < 2$ also.  Thus, part (b) is proved.

The first limit in \eqref{E:betalimits} is easy to see, for
$$
1 = \int_0^t \frac{u}{1+e^{\beta u}}du \leq \int_0^t u du = \frac{t^2}{2},
$$
and thus as $t \to \sqrt{2}^+$, we must have $\beta \to -\infty$.  We evaluate the second limit in \eqref{E:betalimits} by expressing the integral in \eqref{E:beta^2int} in terms of the dilogarithm.  Thus, \eqref{E:beta^2int} implies that for $t >2$, we have
\begin{equation}\label{E:betadef2}
\beta^2=\int_0^{\beta t} \frac{u}{1+e^{u}}du= \text{Li}_2\left(1-e^{-\beta t}\right)- \frac{1}{2}\text{Li}_2\left(1-e^{-2\beta t}\right).
\end{equation}
Hence, $\lim_{t \to \infty} \beta^2= \frac{\pi^2}{6}-\frac{\pi^2}{12}$, so $\lim_{t \to \infty} \beta = \frac{\pi}{2\sqrt{3}}$, and part (c) is proved.
\end{proof}

\begin{proposition}\label{P:B(t)andA(t)limits}
The function $B(t)$ in \eqref{E:B(t)andA_n(t)def} is strictly increasing, and we have the following limits for $B(t)$ and $A(t):= A_n(t) \left(1+e^{-\beta t}\right)^{\{t \sqrt{n}\}}$:
$$
\lim_{t \to \infty} B(t) = \frac{\pi}{\sqrt{3}} \qquad \text{and} \qquad \lim_{t \to \infty} A(t)= \frac{1}{4\sqrt[4]{3}}.
$$
\end{proposition}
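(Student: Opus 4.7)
The plan is to handle the three claims---strict monotonicity of $B$, the limit of $B$, and the limit of $A$---in that order, with the first requiring a small but decisive calculation that exploits the identity for $\beta'(t)$ recorded in Proposition \ref{P:beta}(b), and the remaining two following from Proposition \ref{P:beta}(c) together with the fact that exponential decay beats polynomial growth.

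For monotonicity of $B(t) = 2\beta + t\log(1+e^{-\beta t})$, I would differentiate directly. The chain rule gives
\[
B'(t) = 2\beta'(t) + \log\!\bigl(1+e^{-\beta t}\bigr) - \frac{t\bigl(\beta+t\beta'(t)\bigr)}{1+e^{\beta t}}.
\]
The formula $\beta'(t)=\beta t/\bigl(2(1+e^{\beta t})-t^2\bigr)$ from Proposition \ref{P:beta}(b) rearranges to $2\beta'(t)=t(\beta+t\beta'(t))/(1+e^{\beta t})$, so the first and third terms on the right cancel and the clean identity
\[
B'(t) = \log\!\bigl(1+e^{-\beta t}\bigr)
\]
emerges. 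This is strictly positive for every $t > \sqrt{2}$ (equal to $\log 2$ at $t=2$, consistent with continuity), so $B$ is strictly increasing. For the limit, Proposition \ref{P:beta}(c) gives $\beta(t)\to\pi/(2\sqrt{3})>0$, hence $\beta t\to\infty$ and $t\log(1+e^{-\beta t})\sim t e^{-\beta t}\to 0$. Therefore $B(t)\to 2\cdot\pi/(2\sqrt{3})=\pi/\sqrt{3}$.

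For the limit of $A(t)=\tfrac{1}{2}\bigl(e^{\beta t/2}+e^{-\beta t/2}\bigr)\sqrt{\beta'(t)/(\pi t)}$, I would again invoke Proposition \ref{P:beta}(b): as $t\to\infty$, the term $2e^{\beta t}$ dominates both $2$ and $t^2$ in the denominator of $\beta'(t)$, so
\[
\beta'(t)\sim \frac{\beta t}{2e^{\beta t}}, \qquad \sqrt{\frac{\beta'(t)}{\pi t}}\sim \sqrt{\frac{\beta}{2\pi}}\,e^{-\beta t/2}.
\]
Since $\tfrac{1}{2}(e^{\beta t/2}+e^{-\beta t/2})\sim e^{\beta t/2}/2$, the exponentials cancel and the limit simplifies to
\[
\lim_{t\to\infty} A(t) = \frac{\sqrt{\pi/(2\sqrt{3})}}{2\sqrt{2\pi}} = \frac{1}{2}\sqrt{\frac{1}{4\sqrt{3}}} = \frac{1}{4\sqrt[4]{3}}.
\]

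The main obstacle is simply recognizing the algebraic identity that collapses $B'(t)$ to $\log(1+e^{-\beta t})$; once that is secured, the remaining work is just verifying the exponential/polynomial asymptotics and a final arithmetic simplification of constants. No ingredients beyond Proposition \ref{P:beta} are required.
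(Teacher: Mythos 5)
Your proof is correct and follows essentially the same route as the paper: both differentiate $B$, use the formula for $\beta'(t)$ from Proposition \ref{P:beta}(b) to collapse $B'(t)$ to $\log(1+e^{-\beta t})$, and pass to the limits in \eqref{E:betalimits}. The only cosmetic difference is in handling $A(t)$: the paper first does an exact algebraic rewriting $A(t) = \tfrac{1}{2}\sqrt{\beta(1+e^{-\beta t})\big/\bigl(\pi(2 - t^2/(1+e^{\beta t}))\bigr)}$ (using $(e^{\beta t/2}+e^{-\beta t/2})^2 = (1+e^{\beta t})(1+e^{-\beta t})$) and then takes limits, whereas you replace each factor by its leading asymptotic directly; both yield $\tfrac{1}{2}\sqrt{\beta/(2\pi)} \to 1/(4\sqrt[4]{3})$.
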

\begin{proof}
We compute
\begin{align*}
B'(t) &= 2\beta'(t)- \frac{te^{-\beta t}}{1+e^{-\beta t}}\left(\beta'(t)t+\beta \right)+\log\left(1+e^{-\beta t}\right) \\
&=\beta'(t)\left(2-\frac{t^2e^{-\beta t}}{1+e^{-\beta t}}\right)-\frac{\beta te^{-\beta t}}{1+e^{-\beta t}}+ \log\left(1+e^{-\beta t}\right) \\ &=\log\left(1+e^{-\beta t}\right).
\end{align*}
Thus, $B(t)$ is a strictly increasing function, and using \eqref{E:betalimits} one easily sees that \newline $\lim_{t\to \infty} B(t)=\frac{\pi}{\sqrt{3}}$.

Finally, we can rewrite $A(t)$ using $\beta'(t)$ found in \eqref{E:betaprimedef}, and get
$$
A(t)= \frac{1}{2}\sqrt{\frac{\beta  \left(1+e^{-\beta t}\right)}{\pi\left(2-\frac{t^2}{1+e^{\beta t}}\right)}}.
$$
Using $\eqref{E:betalimits}$ and the boundedness of $\beta$, we have $\lim_{t \to \infty} A(t)= \frac{1}{4\sqrt[4]{3}}$.
\end{proof}

\section{Proofs of Propositions \ref{P:logDtasymp}, \ref{P:expvar} and \ref{P:Probasymp}}\label{S:Proofs}

Recall that $x$ depends on $n$ and $\beta$ as $x=e^{-\frac{\beta}{\sqrt{n}}}.$  In the proofs of Propositions \ref{P:logDtasymp} and \ref{P:Probasymp}, we will need to separate the cases $x>1$, $x<1$ and $x=1$, which after Proposition \ref{P:beta} correspond to $\sqrt{2}<t<2$, $t>2$ and $t=2$, respectively.  With this in mind, we define 
\begin{equation}\label{E:gammadef}
\gamma=\gamma(t):= -\beta(t), \qquad \text{for $\sqrt{2} < t< 2$,}
\end{equation}
so that $\gamma >0$ and $x^{-1}=e^{-\frac{\gamma}{\sqrt{n}}}<1$.

It is also necessary to account for the fact that $t\sqrt{n}$ is not always an integer.    Thus, we define
\begin{equation}\label{E:t_ndef}
t_n:= \frac{\floor{t\sqrt{n}}}{\sqrt{n}}= t-\frac{\{t\sqrt{n}\}}{\sqrt{n}},
\end{equation}
so that $t_n\sqrt{n} \in \mathbb{N}$, and a sum over $k \leq t\sqrt{n}$ is really a sum from $k=1$ to $t_n\sqrt{n}$.  Also, we may replace any differentiable function $f(t_n)$ with $f(t)+o(1)$.  We will often do this below when $f(t_n)$ is part of the constant term.

\begin{proof}[Proof of Proposition \ref{P:logDtasymp}]
Case 1: $t>2$.  The first iteration of Euler-Maclaurin summation (\cite{MV}, Appendix B) picks off the claimed main term and constant term.  Using \newline ${\rm Li}_2\left(-e^{-z}\right)=\int_{\infty}^z \log\left(1+e^{-w}\right)dw$ (substitute $w \to -e^{-w}$ in \eqref{E:Li2def}), we have
\begin{align}
&\log \D_{t,n}\left(x \right)
\nonumber \\ &= \sum_{k=1}^{t_n\sqrt{n}} \log \left(1+e^{\frac{-\beta k}{\sqrt{n}}}\right) \nonumber  \\
&= \int_1^{t\sqrt{n}} \log \left(1+e^{\frac{-\beta u}{\sqrt{n}}}\right)du-\int_{t_n\sqrt{n}}^{t\sqrt{n}} \log \left(1+e^{\frac{-\beta u}{\sqrt{n}}}\right)du \nonumber  \\
&\qquad+ \frac{1}{2}\left(\log \left(1+e^{\frac{-\beta }{\sqrt{n}}}\right) + \log \left(1+e^{-\beta t_n}\right)  \right) -\int_1^{t_n\sqrt{n}} \frac{\frac{\beta }{\sqrt{n}}e^{-\frac{\beta }{\sqrt{n}}u}}{1+e^{-\frac{\beta }{\sqrt{n}}u}}\left(\{u\}-\frac{1}{2}\right)du \nonumber  \\
&= \frac{\sqrt{n}}{\beta }\int_{\frac{\beta }{\sqrt{n}}}^{\beta t} \log \left(1+e^{-v}\right)dv - \frac{\sqrt{n}}{\beta}\int_{\beta t_n}^{\beta t} \log\left(1+e^{-v}\right)dv + \frac{1}{2}\log \left(1+e^{\frac{-\beta }{\sqrt{n}}}\right) \nonumber   \\
&\qquad + \frac{1}{2}\log \left(1+e^{-\beta t}\right)   +o(1)-\int_{\frac{\beta }{\sqrt{n}}}^{\beta t} \frac{e^{-v}}{1+e^{-v}}\left(\left\{\frac{\sqrt{n}v}{\beta }\right\}-\frac{1}{2}\right)dv \nonumber  \\
&= \frac{\sqrt{n}}{\beta }\left(\text{Li}_2\left(-e^{-\beta t}\right)-\text{Li}_2\left(-e^{-\frac{\beta }{\sqrt{n}}}\right) \right) -\{t\sqrt{n}\}\log\left(1+e^{-\beta t}\right) + \frac{1}{2}\log \left(1+e^{\frac{-\beta }{\sqrt{n}}}\right) \nonumber \\
&\qquad + \frac{1}{2}\log \left(1+e^{-\beta t}\right) +o(1) -\int_{\frac{\beta }{\sqrt{n}}}^{\beta t} \frac{e^{-v}}{1+e^{-v}}\left(\left\{\frac{\sqrt{n}v}{\beta}\right\}-\frac{1}{2}\right)dv \label{E:above}.
\end{align}
The latter integral is $o(1)$ because it is the product of an $L^1$ function and a bounded oscillating function\textemdash as in the proof of the Riemann-Lebesgue Lemma, we prove this first when $\frac{e^{-v}}{1+e^{-v}}$ is replaced by a step function, then we approximate $\frac{e^{-v}}{1+e^{-v}}$ in $L^1$ by step functions.  For the rest of the expression, we apply the following identity for the dilogarithm (\cite{AS}, 27.7.6)
\begin{equation}\label{E:Li2id}
\text{Li}_2(-x)=-\frac{\pi^2}{12} + \text{Li}_2\left(1-x\right)- \frac{1}{2}\text{Li}_2\left(1-x^2\right)- \log x \cdot \log(1+x).
\end{equation}
  Thus, recalling \eqref{E:betadef2}, we obtain the following from \eqref{E:above}
\begin{align}
&\frac{\sqrt{n}}{\beta }\left(\beta^2+t\beta \log\left(1+e^{-\beta t}\right) - \text{Li}_2\left(1-e^{-\frac{\beta }{\sqrt{n}}}\right) + \frac{1}{2}\text{Li}_2\left(1-e^{-\frac{2\beta }{\sqrt{n}}}\right) - \frac{\beta }{\sqrt{n}}\log\left(1+e^{-\frac{\beta }{\sqrt{n}}}\right) \right) \nonumber \\
&\qquad + \frac{1}{2}\log \left(1+e^{\frac{-\beta }{\sqrt{n}}}\right) + \frac{1}{2}\log \left(1+e^{-\beta t}\right) -\{t\sqrt{n}\}\log\left(1+e^{-\beta t}\right) + o(1). \label{E:justabove}
\end{align}
From the series representation for the dilogarithm, it follows that
$$
\frac{1}{x}{\rm Li_2}\left(1-e^{-x}\right)= \frac{1-e^{-x}}{x} + \frac{1}{x}\sum_{n \geq 2} \frac{\left(1-e^{-x}\right)^n}{n^2} = 1 + o(1), \qquad \text{as $x \to 0^+$}.
$$
Hence, \eqref{E:justabove} is
\begin{align*}
&\sqrt{n}\left(\beta +t\log\left(1+e^{-\beta t}\right)\right) - 1+1-\log(2) + \frac{1}{2}\log(2) + \frac{1}{2}\log\left(1+e^{-\beta t}\right) \\ &\qquad -\{t\sqrt{n}\}\log\left(1+e^{-\beta t}\right)+ o(1) \\
&= \sqrt{n}\left(\beta +t\log\left(1+e^{-\beta t}\right)\right) + \log\left(\sqrt{\frac{1+e^{-\beta t}}{2}}\right) -\{t\sqrt{n}\}\log\left(1+e^{-\beta t}\right)   + o(1).
\end{align*}

Combining this with $\log \left(x^{-n}\right)= \beta\sqrt{n}$ proves Proposition \ref{P:logDtasymp} for $t>2$.

\vspace{5mm}

Case 2: $\sqrt{2}<t<2$.  We first write
\begin{align}
\log \left(x^{-n}\D_{t,n}(x)\right)  
&= -\gamma\sqrt{n} +\sum_{k =1}^{t_n\sqrt{n}} \log\left(1+e^{\frac{\gamma k}{\sqrt{n}}}\right) \nonumber\\
&= -\gamma\sqrt{n} +\sum_{k =1}^{t_n\sqrt{n}} \left( \frac{\gamma k}{\sqrt{n}} +\log\left(1+e^{-\frac{\gamma k}{\sqrt{n}}}\right)\right) \nonumber \\
&= -\gamma\sqrt{n} + \frac{\gamma t_n(t_n\sqrt{n}+1)}{2}+\sum_{k =1}^{t_n\sqrt{n}}\log\left(1+e^{-\frac{\gamma k}{\sqrt{n}}}\right) \nonumber\\
&= \gamma\sqrt{n}\left(\frac{t_n^2}{2}-1\right)+ \frac{\gamma t_n}{2}+\sum_{k =1}^{t_n\sqrt{n}}\log\left(1+e^{-\frac{\gamma k}{\sqrt{n}}}\right) \nonumber\\
&= \gamma\sqrt{n}\left(\frac{t^2}{2}-1\right)- \gamma t \{t\sqrt{n}\}+ \frac{\gamma t}{2}+\sum_{k =1}^{t_n\sqrt{n}}\log\left(1+e^{-\frac{\gamma k}{\sqrt{n}}}\right) + o(1). \label{E:logDtgamma1}
\end{align}
We then analyze the sum with Euler-Maclaurin summation as before
\begin{align}
&\sum_{k=1}^{t_n\sqrt{n}} \log\left(1+ e^{-\frac{\gamma k}{\sqrt{n}}}\right) \nonumber \\
&= \frac{\sqrt{n}}{\gamma}\left({\rm Li}_2\left(-e^{-\gamma t}\right) -{\rm Li}_2\left(-e^{-\frac{\gamma}{\sqrt{n}}}\right)\right) + \frac{1}{2}\log\left(1+e^{-\frac{\gamma}{\sqrt{n}}}\right) + \frac{1}{2}\log\left(1+e^{-\gamma t}\right) \nonumber \\ & \qquad -\{t\sqrt{n}\}\log\left(1+e^{-\gamma t}\right)+ o(1). \label{E:above2}
\end{align}
This time we rewrite \eqref{E:betadef}, the integral definition for $\beta=-\gamma$, to get
\begin{equation}\label{E:gammadef2}
\gamma^2= \int_0^{\gamma t} \frac{u}{1+e^{-u}}du=-\frac{\pi^2}{12} + \frac{\gamma^2t^2}{2}+ \gamma t \log\left(1+e^{-\gamma t}\right)- {\rm Li}_2\left(-e^{-\gamma t}\right).
\end{equation}
We then apply this and the dilogarithm identity \eqref{E:Li2id} to get the following from \eqref{E:above2}, in a manner similar to Case 1 
\begin{align*}
&\frac{\sqrt{n}}{\gamma}\left(\gamma^2\left(\frac{t^2}{2}-1\right) + \gamma t \log\left(1+e^{-\gamma t}\right) - {\rm Li}_2\left(1-e^{-\frac{\gamma}{\sqrt{n}}}\right)+ \frac{1}{2}{\rm Li}_2\left(1-e^{-2\frac{\gamma}{\sqrt{n}}}\right) \right. \\ & \qquad \left.- \frac{\gamma}{\sqrt{n}}\log\left(1+e^{-\frac{\gamma}{\sqrt{n}}}\right)\right)  +\frac{1}{2}\log\left(1+e^{-\frac{\gamma}{\sqrt{n}}}\right) + \frac{1}{2}\log\left(1+e^{-\gamma t}\right)  -\{t\sqrt{n}\}\log(1+e^{-\gamma t}) \\ &\qquad + o(1)  \\
&= \sqrt{n} \left(\gamma \left(\frac{t^2}{2}-1\right) + t\log\left(1+e^{-\gamma t}\right)\right) + \log \sqrt{\frac{1+e^{-\gamma t}}{2}} -\{t\sqrt{n}\}\log\left(1+e^{-\gamma t}\right)+ o(1).
\end{align*}
Combining with \eqref{E:logDtgamma1}, we have the following expression for $\log\left(x^{-n}\D_{t,n}(x)\right)$
\begin{align*}
&\sqrt{n} \left(\gamma \left(t^2-2\right) + t\log\left(1+e^{-\gamma t}\right)\right) + \log \sqrt{\frac{1+e^{-\gamma t}}{2}} +\frac{\gamma t}{2}-\{t\sqrt{n}\}\left(\gamma t +\log\left(1+e^{-\gamma t}\right)\right)+ o(1) \\
&=\sqrt{n} \left(-2\gamma + t\log\left(1+e^{\gamma t}\right)\right) + \log \sqrt{\frac{1+e^{\gamma t}}{2}} -\{t\sqrt{n}\}\log\left(1+e^{\gamma t}\right)+ o(1).
\end{align*}
Replacing $\gamma$ with $-\beta$ completes the proof for $\sqrt{2}<t<2.$

\vspace{5mm}

Case 3: $t=2$.  Here, we have $\beta=0$ and $x=1$, and so
$$
x^{-n}\D_{2,n}(x)= 2^{t_n\sqrt{n}}= 2^{2\sqrt{n}-\{2\sqrt{n}\}}= e^{2\log 2 \sqrt{n} - \log 2 \{2\sqrt{n}\}},
$$
as required.
\end{proof}

\begin{proof}[Proof of Proposition \ref{P:expvar}]
The proof for $t=2$ (and so $x=1$) is straightforward.  Now let $t \neq 2$.  We need only recognize the Riemann Sums
\begin{align*}
{\rm E}_{x,n}(N)
&= \sum_{k\leq t\sqrt{n}} k \frac{e^{-\frac{\beta k}{\sqrt{n}}}}{1+e^{-\frac{\beta k}{\sqrt{n}}}} = n\sum_{k\leq t\sqrt{n}} \frac{k}{\sqrt{n}} \frac{e^{-\frac{\beta k}{\sqrt{n}}}}{1+e^{-\frac{\beta k}{\sqrt{n}}}} \cdot \frac{1}{\sqrt{n}} \\
&= n\left(\int_0^t \frac{ue^{-\beta u}}{1+e^{-\beta u}}du + O\left(\frac{1}{\sqrt{n}}\right)\right) = n + O(\sqrt{n}),
\end{align*}
by \eqref{E:betadef}.  We calculate the variance similarly, using integration by parts to evaluate the integral.  We also use the fact that $\frac{e^{-u}}{(1+e^{-u})^2}=\frac{e^{u}}{(1+e^{u})^2}.$  Thus,
\begin{align*}
{\rm Var}_{x,n}(N)
&= \sum_{k\leq t\sqrt{n}} k^2 \frac{e^{\frac{-\beta k}{\sqrt{n}}}}{\left(1+e^{-\frac{\beta k}{\sqrt{n}}}\right)^2} \\
&=n^{\frac{3}{2}}\sum_{k\leq t\sqrt{n}} \left(\frac{k}{\sqrt{n}}\right)^2 \frac{e^{\frac{-\beta k}{\sqrt{n}}}}{\left(1+e^{-\frac{\beta k}{\sqrt{n}}}\right)^2} \cdot \frac{1}{\sqrt{n}} \\
&=n^{\frac{3}{2}}\left(\int_0^t \frac{u^2e^{-\beta u}}{\left(1+e^{-\beta u}\right)^2}du +O\left(\frac{1}{\sqrt{n}}\right) \right) \\
&=\frac{n^{\frac{3}{2}}}{\beta^3}\int_0^{\beta t} \frac{u^2e^{ u}}{\left(1+e^{ u}\right)^2}du +O\left(n\right) \\
&= \frac{n^{\frac{3}{2}}}{\beta^3}\left(-\frac{u^2}{1+e^u} \Bigg|_0^{\beta t} + 2 \int_0^{\beta t} \frac{u}{1+e^u} du \right) + O(n) \\
& =  \frac{n^{\frac{3}{2}}}{\beta^3}\left(-\frac{\beta^2 t^2}{1+e^{\beta t}} + 2\beta^2\right) + O(n).
\end{align*}
by \eqref{E:beta^2int}.  Combining and recalling \eqref{E:betaprimedef} finishes the proof.
\end{proof}

The proof of Proposition \ref{P:Probasymp} is the most technical and will require the following two lemmas.
\begin{lemma}\label{L:f_xbound}
Let
\begin{equation}\label{E:f_xdef}
f_x(s):= \log\left(\frac{1+e^{is}x}{1+x}\right) - is\frac{x}{1+x} + \frac{s^2}{2} \frac{x}{(1+x)^2}.
\end{equation}
There exists a constant $c>0$ such that for any $x\in (0,1)$ and any $s\in \R$, we have $$
|f_x(s)|\leq c\frac{x|s|^3}{(1-x)^3}.
$$
\end{lemma}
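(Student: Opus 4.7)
Proof plan. The key observation is that $f_x(s)$ is precisely the third-order Taylor remainder at $s=0$ of the smooth function
$$
g(s) := \log\left(\frac{1+e^{is}x}{1+x}\right).
$$
Setting $z := xe^{is}$ (so that $dz/ds = iz$), a direct calculation gives $g'(s) = iz/(1+z)$ and $g''(s) = -z/(1+z)^2$; evaluated at $s=0$ these reproduce exactly the linear and quadratic coefficients subtracted off in the definition of $f_x$, so that $f_x(s) = g(s) - g(0) - g'(0)\,s - \tfrac{1}{2}g''(0)\,s^2$. Because $x \in (0,1)$, the point $z$ traces a circle of radius $x < 1$ about the origin, so $1+z$ stays in the open right half-plane; hence $g$ is $C^\infty$ on all of $\R$ via the principal branch of $\log$, with no branch-cut ambiguity.

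Next I would differentiate once more to obtain the closed form
$$
g'''(s) = \frac{iz(z-1)}{(1+z)^3}.
$$
For $z = xe^{is}$ with $x \in (0,1)$ one has the elementary estimates $|z-1| \leq 1+x \leq 2$ and $|1+z| \geq 1-x$, yielding the uniform pointwise bound
$$
|g'''(s)| \leq \frac{2x}{(1-x)^3} \qquad (s \in \R).
$$

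Finally, applying the integral form of Taylor's theorem to the complex-valued function $g$ on $\R$ gives
$$
f_x(s) = \int_0^s \frac{(s-u)^2}{2}\, g'''(u)\, du,
$$
and estimating in absolute value under the integral sign yields
$$
|f_x(s)| \leq \frac{|s|^3}{6}\sup_{|u|\leq |s|}|g'''(u)| \leq \frac{x|s|^3}{3(1-x)^3},
$$
so the lemma holds with $c = 1/3$. The one step that requires genuine care is the branch-cut check ensuring that $g$ is truly smooth on all of $\R$; after that, the entire argument reduces to a short Taylor-remainder computation, and I do not foresee any substantive obstacle.
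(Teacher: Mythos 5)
Your proof is correct, and it takes a genuinely different---and arguably cleaner---route than the paper's. The paper proceeds by a case split on $|s|$: for $|s| \le \frac{1-x}{2}$ it expands $\log\left(\frac{1+xe^{is}}{1+x}\right)$ as a double series in $j$ and $k$, swaps the order of summation, identifies the $k=1,2$ terms with the subtracted polynomial, and estimates the tail $k \ge 3$; for $|s| \ge \frac{1-x}{2}$ it bounds the polynomial part directly (crude but sufficient, since then $|s|/(1-x) \ge 1/2$) and then splits the logarithmic part further into $|s| \ge \frac14$ (where the series bound $\sum x^m/m \cdot |1-e^{ism}| \le -2\log(1-x)$ suffices) and $\frac{1-x}{2} \le |s| \le \frac14$ (where $x \ge \frac12$ and a small-argument bound on $\log(1+iS)$ is used). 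Your argument replaces all of this by a single global step: recognize $f_x$ as the exact third-order Taylor remainder of $g(s) = \log\left(\frac{1+xe^{is}}{1+x}\right)$, verify that $g$ is $C^\infty$ on $\R$ (because $1+xe^{is}$ lies in the open right half-plane, so the principal branch is unambiguous), compute $g'''(s) = \frac{iz(z-1)}{(1+z)^3}$ with $z = xe^{is}$, bound it uniformly by $\frac{2x}{(1-x)^3}$ via $|z|=x$, $|1-z|\le 1+x\le 2$, $|1+z|\ge 1-x$, and invoke the integral remainder formula to get $|f_x(s)| \le \frac{x|s|^3}{3(1-x)^3}$. Your computations of $g'$, $g''$, $g'''$ check out, and the branch-cut observation is exactly the point that needs care and is correctly handled. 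Your approach is shorter, avoids the case analysis entirely, and even delivers an explicit, smaller constant $c = \frac13$; the paper's series-based route has the minor advantage of not invoking the integral form of Taylor's theorem for complex-valued functions, but that is a standard tool and not a real cost.
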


\begin{lemma}\label{L:rothszekeres}
Let $\epsilon \in \left(0, \frac{1}{2}\right]$ and let $\|\alpha\|$ denote the distance between $\alpha$ and the nearest integer.  Then
$$
\inf_{\frac{\epsilon}{n} \leq \alpha \leq \frac{1}{2}} \sum_{k \leq n} \left\|k\alpha\right\|^2 \gg n.
$$
\end{lemma}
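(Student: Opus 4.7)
The plan is to partition the range $\alpha \in [\epsilon/n, 1/2]$ at a cutoff of order $1/n$ and apply two different estimates on the two pieces. The ``small $\alpha$'' side is handled by a direct computation exploiting that $k\alpha$ does not wrap around mod $1$ for a substantial fraction of $k$; the ``large $\alpha$'' side is handled by bounding a geometric exponential sum.

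First, for $\alpha \in [\epsilon/n, 1/n]$, I would argue directly. Since $\alpha \leq 1/n$ implies $\lfloor 1/(2\alpha) \rfloor \geq \lfloor n/2 \rfloor$, the first $\lfloor n/2 \rfloor$ values of $k$ all satisfy $k\alpha \leq 1/2$, so $\|k\alpha\| = k\alpha$ exactly. Therefore
$$\sum_{k \leq n} \|k\alpha\|^2 \;\geq\; \alpha^2 \sum_{k=1}^{\lfloor n/2 \rfloor} k^2 \;\gg\; \alpha^2 n^3 \;\geq\; \epsilon^2 n,$$
using $\alpha \geq \epsilon/n$ in the last step.

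For $\alpha \in (1/n, 1/2]$, I would use the trigonometric comparison $\|x\|^2 \geq \pi^{-2} \sin^2(\pi x)$, which follows from $|\sin(\pi x)| \leq \pi \|x\|$. Writing $\sin^2 = (1 - \cos(2\cdot))/2$ yields
$$\sum_{k \leq n} \|k\alpha\|^2 \;\geq\; \frac{1}{2\pi^2}\left(n - \operatorname{Re}\sum_{k=1}^n e^{2\pi i k \alpha}\right).$$
The exponential sum is a geometric series bounded in modulus by $|\sin(\pi \alpha)|^{-1} \leq (2\alpha)^{-1}$, since $\sin(\pi u) \geq 2u$ for $u \in [0, 1/2]$ by concavity. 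For $\alpha > 1/n$ this is strictly less than $n/2$, so the right-hand side exceeds $n/(4\pi^2)$.

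Combining the two pieces, the infimum in the lemma is at least $\min(c\epsilon^2, (4\pi^2)^{-1}) \cdot n$ for some absolute $c > 0$. The main obstacle is the transition region $\alpha \sim 1/n$, where neither estimate is tight in isolation: the geometric series bound becomes vacuous once $(2\alpha)^{-1}$ is comparable to $n$, while a naive ``no wraparound'' condition $k\alpha \leq 1/2$ for \emph{all} $k \leq n$ would only cover $\alpha \leq 1/(2n)$. The fix is to retain only the first $\lfloor n/2 \rfloor$ terms in the small-$\alpha$ estimate, which extends the direct bound up to $\alpha \leq 1/n$ and meets the Fourier bound cleanly at the cutoff.
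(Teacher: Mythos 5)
Your proof is correct, but it takes a genuinely different route from the paper's. The paper's argument (following Roth--Szekeres) begins by observing that $f_n(\alpha) = \sum_{k \leq n}\|k\alpha\|^2$ is piecewise quadratic in $\alpha$, so its infimum on $\left[\frac{1}{2n},\frac{1}{2}\right]$ is attained at a rational $\alpha = a/b$ with $\gcd(a,b)=1$; it then runs a case analysis on the denominator $b$: for $3 \leq b \leq n$ it exploits the equidistribution of $ka \pmod b$ (each residue class hit at least $\lfloor n/b\rfloor$ times), and for $b > n$ it partitions the multiples $ka$ between consecutive multiples of $b$, with a further subcase depending on whether $na < b$. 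You instead use the pointwise inequality $\|x\|^2 \geq \pi^{-2}\sin^2(\pi x)$ to pass to the exponential sum $\sum_{k \leq n} e^{2\pi i k\alpha}$, which is a geometric series bounded in modulus by $(2\alpha)^{-1}$; this is nonvacuous precisely once $\alpha > 1/n$, and you patch the remaining range $[\epsilon/n, 1/n]$ by the direct ``no-wraparound'' estimate on the first $\lfloor n/2\rfloor$ terms. Your approach is shorter and arguably more transparent --- it is the standard analytic-number-theory move of trading $\|\cdot\|$ for a trigonometric quantity and summing the geometric series --- and it avoids any appeal to the rationality of the minimizer. The paper's argument is more elementary in that it uses no complex exponentials, at the cost of a longer case analysis. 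Both methods produce an explicit constant in the $\gg$, and in both the dependence on $\epsilon$ enters only through the small-$\alpha$ endpoint.
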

\noindent We append the proofs of these lemmas to Section \ref{S:lemmaproofs}.  The proof of Lemma \ref{L:f_xbound} is similar to the proof of Lemma 1 in \cite{R}.  Roth and Szekeres \cite{RS} proved Lemma \ref{L:rothszekeres} for $\frac{1}{2n} \leq x \leq \frac{1}{2}$ when $\{k\}$ is replaced by a much more general sequence, but with a weaker lower bound.

\begin{proof}[Proof of Proposition \ref{P:Probasymp}]
To determine the asymptotic behavior of $P_{x,n}(N=n)$, we will apply Fourier inversion to the characteristic function for $N$,
\begin{align*}
\phi_{x,n}(s):= {\rm E}_{x,n}(e^{isN})= \sum_{k \geq 0} P_{x,n}(N=k)e^{isk}&=\frac{1}{\D_{t,n}(x)}\sum_{k \geq 0} \left({\rm Coeff} [x^k] \D_{t,n}(x)\right) x^k e^{isk} \\ &= \frac{\D_{t,n}(xe^{is})}{\D_{t,n}(x)}.
\end{align*}
Note that $\phi_{x,n}$ depends on $n$, although we refrain from notating this.  We have
\begin{equation}\label{normalint}
P_{x,n}(N=n) = \frac{1}{2\pi}\int_{-\pi}^{\pi} \phi_{x,n}(s)e^{-ins}ds  =\frac{1}{2\pi\sigma_n}\int_{-\pi\sigma_n}^{\pi\sigma_n} \phi_{x,n}\left(\frac{u}{\sigma_n}\right) e^{-i\frac{nu}{\sigma_n}}du.
\end{equation}
We split this integral as
\begin{equation}\label{E:intbreak}
\int_{-\pi\sigma_n}^{\pi\sigma_n} = \int_{|u|\leq \frac{\sigma_n}{\sqrt{n}}v_0} + \int_{ \frac{\sigma_n}{\sqrt{n}}v_0 \leq |u| \leq \pi\sigma_n} ,
\end{equation}
where $v_0$ is a sufficiently small constant, depending on $t$ and chosen below.  Note that $\sigma_n \asymp n^{\frac{3}{4}}$, so $\frac{\sigma_n}{\sqrt{n}} \to \infty$.  We show that the integral on the right in \eqref{E:intbreak} tends to 0, while for the left integral we show that 
\begin{equation}\label{phitobellcurve}
\lim_{n \to \infty} \phi_{x,n}\left(\frac{u}{\sigma_n}\right) e^{-i\frac{nu}{\sigma_n}} = e^{-\frac{u^2}{2}},
\end{equation}
pointwise in $u$.  We then show that for some $A'>0$, the integrand $\phi_{x,n}\left(\frac{u}{\sigma_n}\right)$ is dominated by $e^{-A'u^2} \in L^1(\mathbb{R})$.  Thus, applying the Dominated Convergence Theorem, 
\begin{equation}\label{E:limitsqrt2pi}
\lim_{n \to \infty} \int_{|u|\leq \frac{\sigma_n}{\sqrt{n}}v_0}  \phi_{x,n}\left(\frac{u}{\sigma_n}\right)e^{-i\frac{nu}{\sigma_n}}du = \int_{\mathbb{R}} e^{-\frac{u^2}{2}} = \sqrt{2\pi},
\end{equation}
which, when combined with \eqref{normalint}, proves that $P_{x,n}(N=n) \sim \frac{1}{\sqrt{2\pi} \sigma_n}$.  A similar application of the Dominated Convergence Theorem also implies \eqref{E:Nnormal}, since the characteristic function of $\frac{N-n}{\sigma_n}$ is
$$
{\rm E}_{x,n}\left(e^{iu \frac{N-n}{\sigma_n}}\right)={\rm E}_{x,n}\left(e^{i \frac{u}{\sigma_n}N}\right) e^{-iu\frac{n}{\sigma_n}}= \phi_{x,n}\left(\frac{u}{\sigma_n}\right)e^{-iu\frac{n}{\sigma_n}}.
$$

To carry out this plan, we separate the cases $t>2$, $\sqrt{2}<t<2$ and $t=2$.

\vspace{5mm}
Case 1: $t>2$.  Recalling the expectation and variance in \eqref{E:expvardef}, Proposition \ref{P:expvar} implies
\begin{align}
 &\log \left(\phi_{x,n}\left(\frac{u}{\sigma_n}\right) e^{-i\frac{nu}{\sigma_n}}\right)\\  &= \log\left(\D_{t,n}(xe^{i\frac{u}{\sigma_n}})\right)-\log\left(\D_{t,n}(x)\right) -i\frac{nu}{\sigma_n} \nonumber \\
&=\sum_{k \leq t\sqrt{n}} \log\left(\frac{1+x^ke^{i\frac{ku}{\sigma_n}}}{1+x^k}\right)- i\frac{nu}{\sigma_n} \nonumber \\
&= i\frac{u}{\sigma_n}\left(\sum_{k \leq t\sqrt{n}} \frac{kx^k}{1+x^k} - n \right) - \frac{u^2}{2\sigma_n^2}\left(\sum_{k \leq t\sqrt{n}} \frac{k^2x^k}{\left(1+x^k\right)^2}\right) + \sum_{k \leq t\sqrt{n}} f_{x^k}\left(\frac{ku}{\sigma_n}\right) \nonumber \\
&= i\frac{u}{\sigma_n}({\rm E}_{x,n}(N)-n) - \frac{u^2}{2} +\sum_{k \leq t\sqrt{n}} f_{x^k}\left(\frac{ku}{\sigma_n}\right) \nonumber \\
&= iuO\left(n^{-\frac{1}{4}}\right) - \frac{u^2}{2}+\sum_{k \leq t\sqrt{n}} f_{x^k}\left(\frac{ku}{\sigma_n}\right), \label{Proberror}
\end{align}
where $f_{x^k}$ is as in Lemma \ref{L:f_xbound}.  Using Proposition \ref{P:expvar} and Lemma \ref{L:f_xbound}, we have
\begin{align*}
\left|\sum_{k \leq t\sqrt{n}} f_{x^k}\left(\frac{ku}{\sigma_n}\right) \right| &\leq \frac{cu^3}{\sigma_n^3} \sum_{k \leq t\sqrt{n}} k^3 \frac{x^k}{(1-x^k)^3} \\
&= \frac{cu^3n^2}{\sigma_n^3}\sum_{k \leq t\sqrt{n}} \left(\frac{k}{\sqrt{n}}\right)^3 \frac{e^{-\frac{\beta k}{\sqrt{n}}}}{\left(1-e^{-\frac{\beta k}{\sqrt{n}}}\right)^3} \cdot \frac{1}{\sqrt{n}} \\
&= \frac{cu^3n^2}{\sigma_n^3}\left(\int_0^t \frac{v^3 e^{-\beta v}}{\left(1-e^{-\beta v}\right)^3} dv + O\left(\frac{1}{\sqrt{n}}\right)\right) \\
&= u^3 O\left(n^{-\frac{1}{4}}\right),
\end{align*}
 since the integral converges.  This proves \eqref{phitobellcurve}.

Next, we find a dominating function in the range $|u| \leq \frac{\sigma_n}{\sqrt{n}}v_0$.  Here, we will set $v:= \frac{\sqrt{n}}{\sigma_n}u$, so $|v|\leq v_0$.  Recognizing Riemann sums, the following holds for such $v$ uniformly

\begin{align}
\log \phi_{x,n}\left(\frac{v}{\sqrt{n}}\right) &= \sum_{k \leq t\sqrt{n}} \left(\log \left(1+e^{-\frac{\beta }{\sqrt{n}}k + i\frac{vk}{\sqrt{n}}}\right)-\log \left(1+e^{-\frac{\beta }{\sqrt{n}}k}\right) \right) \nonumber\\
&=\sqrt{n} \int_0^{t} \left(\log\left(1+e^{-\beta w+ivw}\right)-\log\left(1+e^{-\beta w}\right)\right) dw + o(\sqrt{n}) \nonumber\\
&=\frac{\sqrt{n}}{\beta -iv}\left(\frac{\pi^2}{12} + \text{Li}_2\left(-e^{-\beta t+ivt}\right)\right) - \frac{\sqrt{n}}{\beta }\left(\frac{\pi^2}{12} + \text{Li}_2\left(-e^{-\beta t}\right)\right) + o(\sqrt{n}). \label{Liint}
\end{align}
The Taylor series for Li$_2(z)$ about  $z=-e^{-\beta t}$ in the variable $z$ is
$$
\text{Li}_2\left(-e^{-\beta t}\right) + \log\left(1+e^{-\beta t}\right)\left(ze^{\beta t}+1\right)- \frac{1}{2} \left(\frac{e^{-\beta t}}{1+e^{-\beta t}} - \log\left(1+e^{-\beta t}\right)\right) \left(ze^{\beta t}+1\right)^2 $$ $$+ O\left(\left(ze^{\beta t}+1\right)^3\right).
$$
Substituting $z=-e^{-\beta t+ivt}$, we obtain
\begin{align}
& \text{Li}_2\left(-e^{-\beta t+ivt}\right)  \nonumber \\
&= \text{Li}_2\left(-e^{-\beta t}\right)+\log\left(1+e^{-\beta t}\right)\left(1-e^{ivt}\right)\nonumber \\
&\qquad - \frac{1}{2} \left(\frac{e^{-\beta t}}{1+e^{-\beta t}} - \log\left(1+e^{-\beta t}\right)\right) \left(1-e^{ivt}\right)^2  + O(v^3) \nonumber \\
&=\text{Li}_2\left(-e^{-\beta t}\right)-i t\log\left(1+e^{-\beta t}\right)v \nonumber \\  & \qquad + \left( \frac{t^2\log\left(1+e^{-\beta t}\right)}{2} + \frac{t^2}{2} \cdot \frac{e^{-\beta t}}{1+e^{-\beta t}} -\frac{t^2\log\left(1+e^{-\beta t}\right)}{2} \right)v^2 + O(v^3) \nonumber \\
&=\text{Li}_2\left(-e^{-\beta t}\right) -it \log\left(1+e^{-\beta t}\right)v +  \frac{1}{2} \cdot \frac{t^2}{1+e^{\beta t}} v^2 + O(v^3) \label{Litay2}.
\end{align}

Also, note that
\begin{equation}\label{taybetais}
\frac{1}{\beta-iv}= \frac{1}{\beta}+ \frac{i}{\beta^2}v - \frac{1}{\beta^3}v^2 + O(v^3).
\end{equation}
Thus, from \eqref{Litay2} and \eqref{taybetais}, we choose $v_0$ small enough so that the dominating term for the real part of \eqref{Liint} is
\begin{align*}
&\sqrt{n}v^2\left(\frac{1}{\beta } \cdot \frac{1}{2} \cdot \frac{t^2}{1+e^{\beta t}} + \frac{t \log\left(1+e^{-\beta t}\right)}{\beta^2} - \frac{1}{\beta^3}\text{Li}_2\left(-e^{-\beta t}\right) - \frac{1}{\beta^3} \cdot \frac{\pi^2}{12} \right) \\
&=\sqrt{n}\frac{v^2}{\beta} \left(\frac{1}{2} \cdot \frac{t^2}{1+e^{\beta t}} - 1 \right),
\end{align*}
where we used the dilogarithm identity \eqref{E:Li2id} with the alternate definition of $\beta$ given in \eqref{E:betadef2}.  By \eqref{betapos}, this is $-A\sqrt{n}v^2$ for some $A>0$.  Hence, for some $A>0$,
$$
\left|\phi_{x,n}\left(\frac{v}{\sqrt{n}}\right)\right| \ll e^{-A\sqrt{n}v^2} \qquad \text{for $|v| \leq v_0$.}
$$

This implies $\left|\phi_{x,n}\left(\frac{u}{\sigma_n}\right)\right| \ll e^{-A'u^2}$ for some $A'$ in the required range.  Thus, \eqref{E:limitsqrt2pi} is proved.

For the remaining range, $\frac{\sigma_n}{\sqrt{n}}v_0 \leq |u| \leq \pi \sigma_n$, we will use the substitution $w:= \frac{u}{\sigma_n}$ and bound $\phi_{x,n}\left(w\right)$ for  $\frac{v_0}{\sqrt{n}} \leq \left| w\right| \leq \pi$.  Following the analysis of Roth and Szekeres (\cite{RS}, p. 253), we write
$$
\left|\frac{1+x^ke^{iwk}}{1+x^k}\right|^2= \frac{1}{(1+x^k)^2}\left(1+2x^k\cos(wk) + x^{2k}\right) = 1-\frac{2x^k(1-\cos(wk))}{(1+x^k)^2}.
$$
Note that the expression on the far left is positive almost everywhere; therefore, \newline $0<\frac{2x^k(1-\cos(wk))}{(1+x^k)^2}<1$ almost everywhere.  Thus, it is safe to expand the logarithm as follows
\begin{align*}
\log\left|\phi_{x,n}(w)\right| &=\frac{1}{2}\sum_{k \leq t\sqrt{n}} \log\left(1-\frac{2x^k(1-\cos(wk))}{(1+x^k)^2} \right) \leq -\frac{1}{2}\sum_{k \leq t\sqrt{n}} \left(1-\cos(wk)\right)\frac{2x^k}{(1+x^k)^2} \\
&\ll -\sum_{k \leq t\sqrt{n}} (1-\cos(wk))
\leq -\sum_{k \leq t\sqrt{n}} \left\|\frac{wk}{2\pi} \right\|^2.
\end{align*}
Since $\frac{v_0}{\sqrt{n}} \leq |w| \leq \pi,$ the latter is $\ll - \sqrt{n}$ by Lemma \ref{L:rothszekeres}, taking $\epsilon \leq t v_0$, so that $ \frac{\epsilon}{t\sqrt{n}}\leq\frac{v_0}{\sqrt{n}} .$  This implies that the right integral in \eqref{E:intbreak} tends to 0, so Proposition \ref{P:Probasymp} is proved for $t>2$.

\vspace{5mm}

Case 2: $\sqrt{2} < t < 2$.  Below, we use the fact that $\frac{x^k}{(1+x^k)^2}= \frac{x^{-k}}{(1+x^{-k})^2}.$  Thus,
\begin{align*}
&\log \left(\phi_{x,n}\left(\frac{u}{\sigma_n}\right)e^{-i\frac{un}{\sigma_n}}\right) \\
&=-i\frac{un}{\sigma_n} + \sum_{k\leq t\sqrt{n}} \log\left(\frac{1+x^{-k}e^{-ik\frac{u}{\sigma_n}}}{1+x^{-k}} e^{ik\frac{u}{\sigma_n}}\right) \\
&=i\frac{u}{\sigma_n}\left(\sum_{k \leq t\sqrt{n}} \frac{kx^k}{1+x^k} -n \right) - \frac{u^2}{2\sigma_n^2}\sum_{k \leq t\sqrt{n}} \frac{k^2x^k}{(1+x^k)^2} + i\frac{u}{\sigma_n}\frac{t_n\sqrt{n}(t_n\sqrt{n}+1)}{2} + \\ & \qquad +\sum_{k \leq t\sqrt{n}} \left(\log\left(\frac{1+x^{-k}e^{-ik\frac{u}{\sigma_n}}}{1+x^{-k}}\right) - i\frac{u}{\sigma_n}\frac{kx^k}{1+x^k} + \frac{u^2}{2\sigma_n^2}\frac{k^2x^k}{(1+x^k)^2} \right) \\
&= o(1) - \frac{u^2}{2} + i\frac{u}{\sigma_n}\frac{t_n\sqrt{n}(t_n\sqrt{n}+1)}{2} + \sum_{k \leq t \sqrt{n}} f_{x^{-k}}\left(-k\frac{u}{\sigma_n}\right) \\ & \qquad -i \frac{u}{\sigma_n} \sum_{k \leq t\sqrt{n}} k\underbrace{\left(\frac{x^k}{1+x^k} + \frac{x^{-k}}{1+x^{-k}}\right)}_{=1} \\
&=  o(1) - \frac{u^2}{2}  + \sum_{k \leq t \sqrt{n}} f_{x^{-k}}\left(-k\frac{u}{\sigma_n}\right) \\
&= -\frac{u^2}{2} + o(1),
\end{align*}
where Lemma \ref{L:f_xbound} was used as before to show that the sum is $o(1)$.  This proves \eqref{phitobellcurve}.

To find a dominating function in the range $|u| \leq \frac{\sigma_n}{\sqrt{n}}v_0$, we once again set $v:= \frac{\sqrt{n}}{\sigma_n}u$, and write
$$
\left|\phi_{x,n}\left(\frac{v}{\sqrt{n}}\right) \right|= \left|e^{i\frac{t_n(t_n\sqrt{n}+1)}{2}v} \prod_{k \leq t\sqrt{n}} \frac{1+x^{-k}e^{-ik\frac{v}{\sqrt{n}}}}{1+x^{-k}}\right|=\left|\phi_{x^{-1},n} \left(\frac{-v}{\sqrt{n}}\right)\right|.
$$
Thus, we may perform an analysis similar to Case 1 with $\beta \to \gamma$ and conclude that the dominating part of Re$\left( \log \phi_{x,n}\left(\frac{v}{\sqrt{n}}\right) \right)$ is
$$
\sqrt{n}(-v)^2\left(\frac{1}{\gamma } \cdot \frac{1}{2} \cdot \frac{t^2}{1+e^{\gamma t}} + \frac{t \log\left(1+e^{-\gamma t}\right)}{\gamma^2} - \frac{1}{\gamma^3}\text{Li}_2\left(-e^{-\gamma t}\right) - \frac{1}{\gamma^3} \cdot \frac{\pi^2}{12} \right). 
$$
We now apply the identity \eqref{E:Li2id} for the dilogarithm with \eqref{E:gammadef2} to get
$$
\sqrt{n} \frac{v^2}{\gamma}\left(\frac{t^2}{2\left(1+e^{\gamma t}\right)} + 1 - \frac{t^2}{2}\right) 
=\sqrt{n}\frac{v^2}{\gamma}\left(\frac{2\left(1+e^{-\gamma t}\right)-t^2}{2\left(1+e^{-\gamma t}\right)}\right) 
= \sqrt{n} v^2 \left(\frac{-t}{\beta'(t) \cdot 2\left(1+e^{\beta t}\right)}\right)
,$$
which is negative by Proposition \ref{P:beta}.  Thus, as in Case 1, $\left|\phi_{x,n}\left(\frac{u}{\sigma_n}\right)\right| \ll e^{-A'u^2}$ for some $A'$ in the required range, so \eqref{E:limitsqrt2pi} is proved.

As in Case 1, a similar application of Lemma \ref{L:rothszekeres} to $\phi_{x^{-1},n}(-w)$ shows that the right integral in \eqref{E:intbreak} tends to 0, so Proposition \ref{P:Probasymp} is proved for $\sqrt{2}<t<2$.

\vspace{5mm}

Case 3: $t=2$.  For fixed $u$ in the range $|u|\leq \frac{\sigma_n}{\sqrt{n}}v_0$, where $v_0$ will be specified below, we write
\begin{align*}
\phi_{1,n}\left(\frac{u}{\sigma_n}\right)e^{-i\frac{un}{\sigma_n}} &=\prod_{k \leq 2\sqrt{n}} \frac{1+e^{ik\frac{u}{\sigma_n}}}{2} \cdot e^{-i\frac{un}{\sigma_n}} \\ &= \prod_{k \leq 2\sqrt{n}} \frac{e^{-ik\frac{u}{2\sigma_n}}+e^{ik\frac{u}{2\sigma_n}}}{2}e^{ik\frac{u}{2\sigma_n}} \cdot e^{-i\frac{un}{\sigma_n}} \\ &= \prod_{k \leq 2 \sqrt{n}} \cos\left(k \frac{u}{2\sigma_n}\right) \cdot e^{i \frac{u}{\sigma_n} \left(\frac{t_n\sqrt{n}(t_n\sqrt{n}+1)}{4}-n\right)} \\ &=\prod_{k \leq 2 \sqrt{n}} \cos\left(k \frac{u}{2\sigma_n}\right) + o(1),
\end{align*}
since 
$$
\frac{t_n\sqrt{n}(t_n\sqrt{n}+1)}{4}= \frac{t_n^2 n}{4}+O(\sqrt{n})= n + O(\sqrt{n}).
$$
Note that, over the summation range, $k= O(\sqrt{n})$ uniformly, so $\frac{k}{\sigma_n}= O\left(\frac{1}{\sqrt[4]{n}}\right)$ uniformly.  Thus, the following holds for fixed $u$, where $v_0$ is chosen so that the logarithms below are defined,
\begin{align*}
\log\left(\phi_{1,n}\left(\frac{u}{\sigma_n}\right)e^{-i\frac{un}{\sigma_n}}\right) &= \sum_{k \leq 2\sqrt{n}} \log \left(\cos\left(k\frac{u}{2\sigma_n}\right)\right) + o(1) \\
&=\sum_{k \leq 2\sqrt{n}} \log \left(1-k^2\frac{u^2}{4\sigma_n^2}+ O\left(\frac{1}{n}\right)\right) + o(1) \\
&=\sum_{k \leq 2\sqrt{n}} \left(-k^2\frac{u^2}{4\sigma_n^2}+ O\left(\frac{1}{n}\right)\right) + o(1) \\
&=-\frac{u^2 \cdot t_n^3n^{\frac{3}{2}}}{4 \sigma_n^2 \cdot 3} + o(1) \\
&=-\frac{u^2}{2} +o(1),
\end{align*}
since $\sigma_n^2 \sim \frac{2}{3}n^{\frac{3}{2}}$ by Propositions \ref{P:expvar} and \ref{P:beta}.  This proves \eqref{phitobellcurve}.

To find a dominating function in the range $|u| \leq \frac{\sigma_n}{\sqrt{n}}v_0$, we write $\frac{v}{\sqrt{n}}:= \frac{u}{\sigma_n}$ once again, and we choose $v_0$ small so that the logarithms below are defined.  Thus,
\begin{align*}
{\rm Re}\left(\log \phi_{1,n}\left(\frac{v}{\sqrt{n}}\right)\right) &= \sum_{k \leq 2\sqrt{n}} \log \cos\left(v\frac{k}{2\sqrt{n}}\right) \\
&= 2\sqrt{n}\int_0^1 \log \cos\left(vw\right)dw + O(\{2\sqrt{n}\}) + O\left(\frac{1}{\sqrt{n}}\right) \\
&=\frac{2\sqrt{n}}{v}\int_0^v \log \cos (w) dw + O(1).
\end{align*}
It is not difficult to calculate the following Taylor series about $v=0$ (the knowledge that the function is even is helpful)
$$
\frac{1}{v}\int_0^v \log \cos (w) dw= -\frac{1}{6}v^2 + O(v^4).
$$
Thus, choosing $v_0$ small enough, we have ${\rm Re}\left(\log \phi_{1,n}\left(\frac{v}{\sqrt{n}}\right)\right) \ll -A\sqrt{n}v^2$ for some $A$, which implies $\left|\phi_{1,n}\left(\frac{u}{\sigma_n}\right)\right| \ll e^{-A'u^2}$ for some $A'$ in the required range for $u$, so \eqref{E:limitsqrt2pi} is proved.

Applying Lemma 2 as in Case 1, one can bound $\phi_{1,n}(w)$ for $w= \frac{u}{\sigma_n}$ in the required range, and show that the right integral in \eqref{E:intbreak} tends to 0.  This proves Proposition \ref{P:Probasymp} for $t=2$.
\end{proof}

\section{Bounding logarithmic series: proofs of Lemmas \ref{L:f_xbound} and \ref{L:rothszekeres}}\label{S:lemmaproofs}

\begin{proof}[Proof of Lemma \ref{L:f_xbound}] The proof is very similar to Lemma 1 in \cite{R}.
For $|s| \leq \frac{1-x}{2},$ we have
\begin{align}
\log\left(\frac{1+xe^{is}}{1+x}\right) = \sum_{j \geq 1} \frac{1}{j}\left((-x)^j-(-x)^je^{isj}\right) &=-\sum_{j \geq 1} \frac{(-x)^j}{j}\sum_{k\geq 1} \frac{(is)^kj^k}{k!} \nonumber \\& = -\sum_{k \geq 1} \frac{(is)^k}{k!} \sum_{j \geq 1} (-x)^jj^{k-1}, \label{swapsum}
\end{align}
where swapping the order of summation in \eqref{swapsum} is valid due to absolute convergence for \newline $|s| \leq \frac{1-x}{2}.$  Indeed,
$$
\left|\sum_{k \geq 1} \frac{(is)^k}{k!} \sum_{j \geq 1} (-x)^jj^{k-1}\right|\leq \sum_{k \geq 1} \frac{s^k}{k} \sum_{j \geq 1} x^j \frac{n(n+1)\cdots (j+k-2)}{(k-1)!} \leq \sum_{k \geq 1} \frac{x}{k} \left(\frac{s}{1-x}\right)^k < \infty.
$$

Note that the $k=1$ and $k=2$ terms in \eqref{swapsum} are, respectively,
$$
-is\sum_{j\geq 1} (-x)^j= is\frac{x}{1+x} \qquad \text{and} \qquad \frac{s^2}{2}\sum_{j \geq 1} (-x)^jj= -\frac{s^2}{2}\frac{x}{(1+x)^2}.
$$
Thus, by \eqref{swapsum}, we obtain
\begin{align*}
|f_x(s)| &\leq \sum_{k \geq 3} \frac{|s|^k}{k!}\sum_{j \geq 1} j^{k-1}x^j \leq \sum_{k \geq 3} \frac{x}{k} \left(\frac{|s|}{1-x}\right)^k \leq \frac{x|s|^3}{3(1-x)^3} \cdot \frac{1}{1-\frac{|s|}{1-x}} \leq \frac{2x|s|^3}{3(1-x)^3}.
\end{align*}

For $|s| \geq \frac{1-x}{2}$, we have
$$
\left|-i \frac{x}{1+x}s + \frac{1}{2} \frac{x}{(1+x)^2} s^2 \right| \leq \frac{x|s|^3}{(1-x)|s|^2} + \frac{x|s|^3}{(1-x)^2|s|} \leq (4+2) \frac{x|s|^3}{(1-x)^3},
$$
so it remains to prove that for $|s|\geq \frac{1-x}{2}$,
$$
\left|\log\left(\frac{1+xe^{is}}{1+x}\right)\right| \leq c' \frac{x|s|^3}{(1-x)^3},
$$
for some $c' > 0.$  For $|s| \geq \frac{1}{4}$, we have
\begin{align*}
\left|\log\left(\frac{1+xe^{is}}{1+x}\right)\right| \leq \sum_{m \geq 1} \frac{x^m}{m} \left|1-e^{ism}\right| \leq -2\log(1-x) \leq 2\frac{x}{1-x} \leq 2\cdot 4^3\frac{x|s|^3}{(1-x)^3}.
\end{align*}
Finally, for $\frac{1-x}{2} \leq |s| \leq \frac{1}{4}$ (which implies $x \geq \frac{1}{2}$ and $\frac{|s|}{1-x} \geq \frac{1}{2}$), we have
\begin{align*}
\left|\log\left(\frac{1+xe^{is}}{1+x}\right) \right|
&\leq \left|\log\left(1+\frac{x}{1+x}(e^{is}-1)\right) \right| \\
&\leq \left|\log\left(1+ie^{\frac{s}{2}}S\right)\right|,
\end{align*}
where $S:= \frac{x}{1+x} \cdot 2\sin\left(\frac{s}{2}\right)$ satisfies $$|S| \leq \frac{x|s|}{1+x} \leq \frac{x|s|}{1-x} \leq 4\frac{x|s|^3}{(1-x)^3}.$$
This implies $|S| \leq \frac{1}{4}$.  Thus, $$\left|\log\left(1+ie^{\frac{s}{2}}S\right)\right| \leq \frac{|S|}{1-|S|} \leq \frac{4}{3}|S| \leq \frac{4}{3}\cdot 4\frac{x|s|^3}{(1-x)^3},$$ and we are done.
\end{proof}

\begin{proof}[Proof of Lemma \ref{L:rothszekeres}]

Let $f_n(\alpha):= \sum_{k\leq n} ||k\alpha||^2.$  We first prove that
\begin{equation}\label{E:inffn}
\inf_{\frac{1}{2n} \leq \alpha \leq \frac{1}{2}} f_n(\alpha) \gg n.
\end{equation}

The extension of \eqref{E:inffn} to the range $\left[\frac{\epsilon}{n}, \frac{1}{2}\right]$ for $\epsilon \in \left(0, \frac{1}{2}\right]$ follows from $$\inf_{\frac{\epsilon}{n} \leq \alpha \leq \frac{1}{2n}} f_n(\alpha) = \sum_{k \leq n} k^2 \frac{\epsilon^2}{n^2} \gg n.$$

Now, each summand $||k\alpha||^2$ is a piecewise parabola of the form $k^2\left(\alpha- \frac{\ell_k}{k}\right)^2$; therefore, $f_n(\alpha)$ may also be viewed as piecewise of the form $$\sum_{k \leq n} k^2\left(\alpha- \frac{\ell_k}{k}\right)^2,$$ for some integers $\ell_k$. Thus, we see by taking the derivative of $f$ that its minimum in $\left[\frac{1}{2n}, \frac{1}{2}\right]$ occurs at a rational number (or possibly more than one).  Therefore, it suffices to show that there is a constant $c$, independent of $n$, such that $f_n(\alpha) \geq cn$ for all rational $\alpha \in \left[\frac{1}{2n}, \frac{1}{2}\right]$.  In what follows, we will be rather wasteful with our estimates, but for clarity we will produce explicit constants at each step.

Naturally,
$$
f_n\left(\frac{1}{2}\right)\geq \left\lfloor \frac{n}{2} \right\rfloor \frac{1}{4} \geq \frac{n}{16}.
$$
Now let $\alpha= \frac{a}{b}$ with $\gcd(a,b)=1$ and $3 \leq b \leq n$.  For each $j \in [1,b-1]$, we have
$$
\#\{k \leq n: \ ka \equiv j \pmod{b}\} \geq \left\lfloor\frac{n}{b} \right\rfloor.
$$
Thus,
$$
f_n(\alpha) = \sum_{k \leq n} \left\|k \frac{a}{b} \right\|^2 \geq  2 \cdot \sum_{j < \frac{b}{2}} \left\lfloor\frac{n}{b} \right\rfloor \frac{j^2}{b^2}      \geq 2 \cdot \left\lfloor \frac{n}{b} \right\rfloor \frac{1}{b^2} \frac{b^3}{2 \cdot 6 \cdot 2^3} \geq \frac{1}{96} n.
$$

Now assume $b > n$, $\gcd(a,b)=1$, and $\frac{1}{2n} \leq \frac{a}{b} \leq \frac{1}{2}.$  If $\frac{b}{2} \leq na < b$, then clearly
$$
f_n(\alpha) = \sum_{k \leq n} \left\|k\frac{a}{b}\right\|^2 \geq \frac{a^2}{b^2}\sum_{k \leq \frac{n}{2}} k^2 \geq \frac{1}{2^2n^2}\frac{n^3}{2\cdot 6 \cdot 2^3} =\frac{n}{384}.
$$

Now assume $na \geq b $ and note that $a$ generates the additive group $\pmod{b}$.  Partition the set $\{ka\}_{k=1}^n$ into subsets between multiples of $b$ as
$$
\left\{a, 2a, \dots, \left\lfloor \frac{b}{a} \right\rfloor a\right\} \cup \left\{\left(\left\lfloor \frac{b}{a} \right\rfloor +1\right)a, \dots, \left(2\left\lfloor \frac{b}{a} \right\rfloor + \eta_2\right)a\right\} \cup \dots,
$$
where, in the $j$-th set, $\eta_j \in \{0,1\}$.  There are at least $\floor[\big]{n / \floor[\big]{\frac{b}{a}}} \geq 1$ such sets, and each contains a sequence of $\floor[\big]{\frac{b}{a}} \big/ 2 \geq 1$ elements that are at least $a, 2a, \dots, \floor[\big]{\frac{b}{a}}a \big/ 2,$ respectively.  Hence, we have
$$
f_n(\alpha) \geq \left\lfloor \frac{n}{\left\lfloor \frac{b}{a} \right\rfloor} \right\rfloor \sum_{j \leq \left\lfloor \frac{b}{a} \right\rfloor \big/ 2} \frac{j^2a^2}{b^2} \geq \frac{n}{2\left\lfloor \frac{b}{a} \right\rfloor}\frac{a^2}{b^2} \frac{\left\lfloor \frac{b}{a} \right\rfloor^3}{2\cdot 6 \cdot 2^3} \geq \frac{n}{768},
$$
since $\frac{b}{a} \geq 2$ implies $\frac{a}{b}\left\lfloor \frac{b}{a} \right\rfloor \geq \frac{1}{2}$.  Thus, \eqref{E:inffn} is proved and with it, Lemma \ref{L:rothszekeres}.
\end{proof}

\end{document}